\newcommand{\be}{\begin{equation}}
\newcommand{\ee}{\end{equation}}
\newcommand{\beq}{\begin{eqnarray}}
\newcommand{\eeq}{\end{eqnarray}}
\newtheorem{thm}{Theorem}[section]
\newtheorem{lma}{Lemma}[section]
\newtheorem{prop}{Proposition}[section]
\theoremstyle{remark}
\numberwithin{equation}{section}
\def\ep{\epsilon}
\def\p{\partial}
\def\S{\Sigma}
\def\tr{{\rm tr}}
\def\p{\partial}
\def\lf{\left}
\def\ri{\right}
\def\la{\langle}
\def\ra{\rangle}
\def\bg{\bar{g}}
\def\bg{\bar{g}}
\def\l{\lambda}
\def\Ric{\text{\rm Ric}}
\def\div{\text{\rm div}}
\def\vh{\vspace{.3cm}}
\def\Pi{\overline{\displaystyle{\mathbb{II}}}}
\def\a{\alpha}
\def\bD{\overline{\nabla}}
\def\bnabla{\overline{\nabla}}
\def\bnu{\overline{\nu}}
\def\th{ (\tr \hspace{.05cm} h)}
\def\divh{ \div \hspace{.05cm} h}
\def\vbg{d \mathrm{vol}_{\bg} }
\def\vsg{d \sigma_{\bg}}
\begin{document}

\title[]{Remarks on a scalar curvature rigidity theorem of Brendle and Marques}

\author{Graham Cox}
\address[Graham Cox]{Department of Mathematics, Duke University, Durham, NC 27708, USA.}
\email{ghcox@math.duke.edu}
\author{Pengzi Miao$^1$}
\address[Pengzi Miao]{School of Mathematical Sciences, Monash University, Victoria 3800, Australia;
Department of Mathematics, University of Miami, Coral Gables, FL 33124, USA.}
\email{Pengzi.Miao@sci.monash.edu.au; pengzim@math.miami.edu}
\author{Luen-Fai Tam$^2$}
\address[Luen-Fai Tam]{The Institute of Mathematical Sciences and Department of
 Mathematics, The Chinese University of Hong Kong,
Shatin, Hong Kong, China.}
\email{lftam@math.cuhk.edu.hk}
\thanks{$^1$Research partially supported by
Australian Research Council Discovery Grant  \#DP0987650
and by a 2011 Provost Research Award of the University of Miami}
\thanks{$^2$Research partially supported by Hong Kong RGC General Research Fund  \#CUHK 403011}
\renewcommand{\subjclassname}{%
  \textup{2010} Mathematics Subject Classification}
\subjclass[2010]{Primary 53C20; Secondary 53C24}\date{}

\begin{abstract}
We give an improvement of a scalar curvature rigidity theorem of
Brendle and Marques regarding geodesic balls in $\mathbb{S}^n$.
The main result is that Brendle and Marques' theorem holds on a geodesic ball larger than that specified in \cite{BrendleMarques}.
\end{abstract}

\maketitle

\markboth{Graham Cox, Pengzi Miao and Luen-Fai Tam}{Remarks on a scalar curvature rigidity theorem}

\section{Introduction}

In a recent paper \cite{BrendleMarques}, Brendle and Marques proved the following theorem on scalar curvature rigidity of geodesic balls in the standard
$ n$-dimensional sphere $ \mathbb{S}^n$.

\begin{thm}[Brendle and Marques \cite{BrendleMarques}]    \label{thm-BM}
Let $\Omega=B(\delta) \subset \mathbb{S}^n $ be a closed geodesic ball of radius $\delta$ with
\be \label{eq-BM-rbd}
\cos\delta\ge \frac{2}{\sqrt{n+3}}.
\ee
 Let $ \bg $ be the
standard metric on $ \mathbb{S}^n$.
Suppose $g$ is another  metric on $\Omega$  with the properties:
\begin{itemize}
\item  $ R(g) \ge  R(\bg)$ at each point in $ \Omega$
\item  $ H(g) \ge H(\bg)  $ at each point on $ \partial \Omega$
\item  $ g $ and $ \bg$ induce the same metric on $ \p \Omega$
\end{itemize}
where $ R(g)$, $ R(\bg)$ are the scalar curvature of $ g$, $ \bg$, and
$H (g)$, $H(\bg)$ are the mean curvature of  $ \p \Omega $  in $(\Omega, g)$,
$(\Omega, \bg)$.
If $ g - \bg $ is sufficiently small in the $ C^2$-norm, then $\varphi^*(g)=\bg$
 for some diffeomorphism  $ \varphi: \Omega \rightarrow \Omega$
 such that  $\varphi|_{\p\Omega}=\text{\rm id}$.
\end{thm}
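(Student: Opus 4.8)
The plan is to follow the scheme of Brendle and Marques: first replace $g$ by a conformally related metric of constant scalar curvature whose boundary mean curvature is no smaller, then exploit the static potential of $(\Omega,\bg)$ in a second--variation argument, with the radius hypothesis \eqref{eq-BM-rbd} entering only at the one decisive estimate. Throughout, $R(\bg)=n(n-1)$ and $\Ric(\bg)=(n-1)\bg$.

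\textbf{Step 1 (reduction to constant scalar curvature).} I would first solve the Yamabe--type Dirichlet problem $-\tfrac{4(n-1)}{n-2}\Delta_g u+R(g)u=n(n-1)u^{\frac{n+2}{n-2}}$ on $\Omega$ with $u|_{\p\Omega}=1$, and set $\hat g:=u^{\frac{4}{n-2}}g$, so that $R(\hat g)\equiv n(n-1)$ and $\hat g|_{\p\Omega}=g|_{\p\Omega}$. Since $g$ is $C^2$--close to $\bg$, $R(g)$ is $C^0$--close to $n(n-1)$, and the linearization of this problem at $u\equiv1$, $g=\bg$, is a nonzero multiple of $-(\Delta_{\bg}+n)$, which is invertible under the Dirichlet condition: \eqref{eq-BM-rbd} forces $\delta<\pi/2$, and the first Dirichlet eigenvalue of $-\Delta_{\bg}$ on $B(\delta)$ then exceeds $n$ (it equals $n$ exactly at $\delta=\pi/2$, with eigenfunction $\cos r$). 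Hence $u$ exists and is $C^2$--close to $1$, so $\hat g$ is $C^2$--close to $\bg$. The same positivity of the first eigenvalue yields a maximum principle forcing $u\le1$ on $\Omega$ (here one uses $R(g)\ge n(n-1)$), so $\p_\nu u\ge0$ on $\p\Omega$; since the conformal change gives $H(\hat g)=H(g)+\tfrac{2(n-1)}{n-2}\p_\nu u$ there, $H(\hat g)\ge H(g)\ge H(\bg)$. So it suffices to prove the theorem with $g$ replaced by a metric $\hat g$ that is $C^2$--close to $\bg$, has $R(\hat g)=R(\bg)$ and $H(\hat g)\ge H(\bg)$, and induces the same metric as $\bg$ on $\p\Omega$. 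Granting this, $H(\hat g)=H(\bg)=H(g)$ on $\p\Omega$ forces $\p_\nu u\equiv0$, whence the strong maximum principle gives $u\equiv1$, i.e.\ $g=\hat g$.

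\textbf{Step 2 (static potential and the key identity).} Let $f$ be the restriction to $\Omega$ of $\cos r$, where $r=d_{\bg}(\cdot,p)$ and $p$ is the center of $B(\delta)$. Then $\nabla^2 f+f\bg=0$, hence $\Delta f=-nf$ and $-(\Delta f)\bg+\nabla^2 f-f\Ric(\bg)=0$, so $f$ is a static potential; moreover $f>0$ on $\Omega$ since $\delta<\pi/2$, and $\p_\nu f=-\sin\delta$ on $\p\Omega$. Put $h:=\hat g-\bg$; it is $C^2$--small and $\iota^* h=0$ on $\p\Omega$. Expanding $R(\hat g)=R(\bg)+DR_{\bg}(h)+\tfrac12 D^2R_{\bg}(h,h)+O(|h|^3)$, using $R(\hat g)=R(\bg)$, multiplying by $f$ and integrating over $\Omega$, integrating $\int_\Omega f\,DR_{\bg}(h)$ by parts (the interior term disappears because $f$ is a static potential, leaving only boundary terms), and finally using $H(\hat g)-H(\bg)=DH_{\bg}(h)+O(|h|^2)$, one obtains an identity of the form
\be \label{eq-plan}
\int_{\p\Omega}\big(H(\hat g)-H(\bg)\big)f\,\vsg=\mathcal{Q}_{\bg,f}(h)+O\big(\|h\|_{C^2}^3\big),
\ee
where $\mathcal{Q}_{\bg,f}$ is an explicit quadratic form in the $1$--jet of $h$: a weighted interior second variation $-\tfrac12\int_\Omega f\,D^2R_{\bg}(h,h)\,\vbg$ together with boundary terms carrying the factor $\p_\nu f=-\sin\delta$. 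The left side of \eqref{eq-plan} is $\ge0$ because $H(\hat g)\ge H(\bg)$ and $f>0$.

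\textbf{Step 3 (the decisive estimate, and conclusion).} The heart of the proof — and the step I expect to be the main obstacle — is to show that, under \eqref{eq-BM-rbd}, $\mathcal{Q}_{\bg,f}(h)\le0$, with equality only when $h$ is a Lie derivative of $\bg$. After a gauge--fixing diffeomorphism equal to the identity on $\p\Omega$ one may take $h$ to be transverse, and the constraint $R(\hat g)=R(\bg)$ together with the invertibility from Step 1 then makes the leading part of $h$ transverse--traceless, which simplifies $D^2R_{\bg}(h,h)$. Integrating by parts once more, using $\Delta f=-nf$ and the constancy of the curvature of $(\mathbb{S}^n,\bg)$, one massages $-\tfrac12\int_\Omega f\,D^2R_{\bg}(h,h)$ together with the boundary terms into a combination of a weighted Dirichlet energy, a weighted $L^2$--term, and $\sin\delta$--weighted boundary integrals; the inequality $\mathcal{Q}_{\bg,f}(h)\le0$ then reduces to a weighted Poincaré--type inequality on $B(\delta)$ whose constant must beat a dimensional one, and $\cos\delta=2/\sqrt{n+3}$ is precisely the threshold at which this balancing of interior and boundary contributions succeeds (it is this estimate that the present paper improves upon). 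Combining this with \eqref{eq-plan} and the nonnegativity of its left side, and absorbing the cubic error via elliptic estimates that control $\|h\|_{C^2}$ (available since $R(\hat g)$ is constant and the boundary data are pinned), forces $h$ to be pure gauge; that is, $\hat g=\varphi^*\bg$ for a diffeomorphism $\varphi$ with $\varphi|_{\p\Omega}=\mathrm{id}$. With Step 1 this yields $g=\varphi^*\bg$, proving the theorem.
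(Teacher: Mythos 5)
Your Step 3 is not a proof but a placeholder, and it is exactly where the theorem lives. The entire content of the Brendle--Marques result is the explicit quadratic estimate that you defer: in the paper this is Theorem \ref{thm-BM-est}, whose interior terms come with the favorable weight $\l=\cos r$ and whose boundary terms have the explicit coefficients \eqref{eq-BM-est-1} and \eqref{eq-BM-est-2}, namely $\frac14 H(\bg)\l+\p_{\bnu}\l=\frac{(n+3)\cos^2\delta-4}{4\sin\delta}$ and $\frac{n}{2(n-1)}H(\bg)\l+\frac12\p_{\bnu}\l=\frac{(n+1)\cos^2\delta-1}{2\sin\delta}$; the hypothesis $\cos\delta\ge 2/\sqrt{n+3}$ is precisely the statement that both are nonnegative, and then \eqref{eq-BM-est-0} gives coercivity in $\|h\|_{W^{1,2}}$ strong enough to absorb the cubic errors. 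You assert only that ``a weighted Poincar\'e-type inequality'' will produce the threshold $2/\sqrt{n+3}$, without deriving the quadratic form $\mathcal{Q}_{\bg,f}$, identifying its boundary coefficients, or proving the coercivity needed to beat the $O(\|h\|_{C^2}^3)$ error. Until that computation is carried out, no specific radius bound has been proved. A second concrete error: you claim the gauge-fixing plus the constraint $R(\hat g)=R(\bg)$ makes the leading part of $h$ transverse--traceless. It does not. Divergence-free gauge together with vanishing linearized scalar curvature only gives $-\Delta_{\bg}\th-(n-1)\th= O(|h|^2+|\bD h|^2)$ (cf.\ \eqref{eq-reduced-linear-R}), with no boundary condition on $\th$, so the trace does not vanish; indeed in the actual estimate the terms $|\bnabla\th|^2$ and $\th^2$ appear explicitly and must be kept (they are essential in the present paper's improvements as well).

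Your Step 1 (conformal deformation to $R\equiv n(n-1)$ via a Yamabe-type Dirichlet problem) is a genuine departure from Brendle--Marques, who work directly with the inequalities $R(g)\ge n(n-1)$, $H(g)\ge H(\bg)$ and keep the differences $R(g)-R(\bg)$, $H(g)-H(\bg)$ as nonnegative terms in the weighted identity; the gauge step they use is only the diffeomorphism making $h$ divergence free. Your reduction is plausible in the perturbative regime (the linearized operator $-(\Delta_{\bg}+n)$ is indeed invertible with Dirichlet conditions for $\delta<\pi/2$, and the sign argument for $u\le 1$ and $\p_{\bnu}u\ge 0$ can be made to work near $u\equiv 1$), but it buys you little: it trades the harmless nonnegative source terms for extra nonlinear PDE bookkeeping, and it does nothing to supply the missing decisive estimate. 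As written, the proposal reproduces the outline of the known strategy but omits its substance, so it does not constitute a proof of the stated theorem.
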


Theorem \ref{thm-BM} is an interesting rigidity result for domains in
$\mathbb{S}^n$ because the corresponding statement  is false for $\delta=\frac{\pi}{2}$, which
follows from the counterexample to  Min-Oo's conjecture (\cite{MinOo}) constructed by  Brendle,
Marques  and Neves in \cite{BrendleMarquesNeves}.
For an account of the connection of Theorem \ref{thm-BM} to other rigidity phenomena  involving scalar curvature, readers are referred to the recent survey  \cite{Brendle} by Brendle.

In this paper, we provide an improvement of Theorem \ref{thm-BM}
 by showing that Theorem \ref{thm-BM} is still valid on geodesic balls strictly {\em larger}  than those specified by \eqref{eq-BM-rbd}.
Precisely, we prove that condition  \eqref{eq-BM-rbd}  in Theorem \ref{thm-BM} 
can be  replaced by either one of the following  {weaker} conditions:

{\em \begin{itemize}
  \item[({\bf a})]  $ \cos \delta > \zeta$, where $ \zeta$ is the positive constant given by
  $$
 \zeta^2 =
 \frac{ 4 ( n+4) - 4 \sqrt{2n -1} }{ n^2 + 6 n + 17 } .
 $$
  \item[({\bf b})]  $\cos\delta>  \cos \delta_0$, where $\delta_0$ is the unique zero of the function
  $$
 F (\delta) = \alpha(\delta) +
\frac{ (n+3) \cos^2 \delta - 4 }{4  \sin^2 \delta }
 $$
 where
  $ \a(\delta)=   \frac {(  n + 1  )}{8 n}  \lf[1-   \lf( 1 - \frac{n}{2 \mu(\delta)} \ri) \cos\delta    \ri] ^{-1} $
  and $\mu(\delta)$ is the first nonzero Neumann eigenvalue of $B(\delta)$.  In particular,
  $\delta_0 $ satisfies
 \be  \label{eq-7n}
 (\cos\delta_0)^2 <\frac{7n-1}{2n^2+5n-1}.   
 \ee
  \end{itemize}}

We compare the conditions ({\bf a}) and ({\bf b}). It follows from \eqref{eq-7n}  that  $\delta_0 $ in  ({\bf b})  satisfies
\be \label{eq-compare1}
\limsup_{n\to\infty}\frac{(\cos\delta_0)^2}{\frac{4}{n+3}}\le \frac78 ,
\ee
while in  ({\bf a})  one has
\be \label{eq-compare3}
\lim_{n\to\infty}\frac{\frac{4(n+4)-4\sqrt{2n-1}}{n^2+6n+17}}{\frac{4}{n+3}}=1.
\ee
Therefore, ({\bf b}) gives a better improvement
of Theorem \ref{thm-BM} for large $n$.

For relatively small $n $,  ({\bf a})  appears to
be a better condition. For instance,  the constant $ \zeta$ in
({\bf a})  is given by
\be
 \zeta \approx
\left\{
\begin{array}{ll}
0.6581,  & n = 3 \\
0.6130,  & n = 4  \\
0.5774,  & n = 5,
\end{array}
\right.
\ee
while  $ \cos \delta_0 $ in ({\bf b}) is  restricted by (see by Lemma \ref{lma-alpha} (iii)),
\be
\cos \delta_0 > \kappa \approx
\left\{
\begin{array}{ll}
0.6919, & n = 3 \\
0.6512, & n = 4  \\
0.6155, & n = 5.
\end{array}
\right.
\ee
Thus, ({\bf a}) provides a better improvement of  Theorem \ref{thm-BM}
at least for dimensions $n = 3, 4, 5$.

\vspace{.2cm}

{\em Acknowledgment}.  The first author would like to thank Hubert Bray and Michael Eichmair for
 helpful discussions. The third author wants to thank Yuguang Shi for useful discussions.

\section{rigidity of geodesic balls}\label{s-rigidity}
Throughout this paper, we let $\Omega = B(\delta) \subset \mathbb{S}^n $ be a (closed) geodesic ball  
of radius $ \delta < \frac{\pi}{2}$, with boundary $ \Sigma = \p B (\delta) $. We denote by $ \bg $ the  standard metric on $ \mathbb{S}^n$, with volume form $ \vbg $ (\textit{resp.} $\vsg$) on $ \Omega$  (\textit{resp.} $\Sigma$). We additionally define $ \bD $ and $ \Delta_{\bg}$ to be the covariant  derivative and Laplace operator of $ \bg$,  and adopt the convention that the divergence, trace and norm
(denoted by $ \div (\cdot) $, $ \tr (\cdot) $ and $ | \cdot |$, respectively) are always computed with respect to $ \bg$.

We assume that $ g = \bg  + h $ is a metric close to $ \bg $ (say $ | h | \le \frac12 $ at each point in $ \Omega$) and  that $ g $ and $ \bg $ induce the same metric on $ \Sigma $. The outward unit normal to $ \Sigma $ in   $ (\Omega, \bg)$ is denoted by $ \bnu$, and $ X $ is the vector field on $ \Sigma$ dual to the $1$-form  $ h ( \cdot , \bnu) |_{T(\Sigma)}$, \textit{i.e.} $ \bg (v, X ) = h (v, \bnu) $ for any vector $ v $  tangent to $ \Sigma $. Finally, for any function $f$ and vector $\nu$, $ \p_\nu  f $ denotes the  directional derivative of $f$ along  $ \nu$.

\subsection{Brendle and Marques' proof}\label{s-review}
The following weighted integral estimate of  $(R(g) - R(\bg) )$ and $ ( H(g) - H(\bg) ) $  plays a key role in the proof of  Theorem \ref{thm-BM} in \cite{BrendleMarques}.

\begin{thm}[Brendle and Marques \cite{BrendleMarques}]  \label{thm-BM-est}
Let $ \Omega = B(\delta)$ and $ \l = \cos r $, where
$ r $ is the $ \bg $-distance to the center of $ B(\delta) $.
Assume  $ \div (h) = 0 $ where
$ h = g - \bg $. Then
\begin{equation*}\label{eq-RH rigidity 1}
 \begin{split}
  & \int_\Omega [R(g)- n (n-1) ] \l \ \vbg   +\int_\S (2- h(\bnu, \bnu) )[H(g)-H(\bg)]\l \ \vsg \\
    =   & \
  \int_\Omega \lf[   - \frac14 ( | \bD h |^2 + | \bnabla \th  |^2 )-\frac12\lf(|h|^2+\th^2\ri)  \ri] \l
   \ \vbg \\
& \ + \int_\Sigma H ({\bg}) \lf[ - \frac14  h(\bnu, \bnu)^2   - \frac n{2(n-1)}  |X|^2   \ri] \l \ \vsg  \\
& \ + \int_\Sigma  \lf[ - h(\bnu, \bnu)^2  - \frac12 | X|^2 \ri] \p_{\bnu} \l  \ \vsg
  + \int_\Omega E(h)  \ \vbg +  \int_\Sigma F(h) \ \vsg
 \end{split}
 \end{equation*}
where
$
| E(h) | \le C ( | h|^3 + | \bD h |^3 ), \
|F(h)|\le C  \lf(|h|^3+|h|^2|\bD h|\ri)
$
for some constant $C$ depending only on $n$.
\end{thm}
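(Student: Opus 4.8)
The plan is to derive the identity by starting from the second variation formula for scalar curvature and integrating against the weight $\l = \cos r$, carefully tracking all boundary contributions. First I would recall that for $g = \bg + h$, the linearization of scalar curvature is $DR|_{\bg}(h) = -\Delta_{\bg}\th + \div\div h - \la \Ric(\bg), h\ra$, and since $\bg$ has constant curvature $\Ric(\bg) = (n-1)\bg$, this becomes $-\Delta_{\bg}\th + \div\div h - (n-1)\th$. Using the hypothesis $\div h = 0$, the $\div\div h$ term vanishes, so $DR|_{\bg}(h) = -\Delta_{\bg}\th - (n-1)\th$. The quadratic term $D^2R|_{\bg}(h,h)$ must then be computed; its integral against $\l$ is where the $-\frac14|\bD h|^2 - \frac12|h|^2$ type expressions (and their $\th$-counterparts) will come from, together with error terms $E(h)$ absorbing the cubic-and-higher remainder $O(|h|^3 + |\bD h|^3)$. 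So $R(g) - n(n-1) = DR|_{\bg}(h) + \tfrac12 D^2R|_{\bg}(h,h) + E(h)$ with $E(h)$ of the stated order.

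Next I would integrate $[R(g) - n(n-1)]\l$ over $\Omega$ and integrate by parts to move derivatives off $h$ and onto $\l$. The key analytic input is that $\l = \cos r$ is a \emph{static potential} on $\mathbb{S}^n$: it satisfies $\bD^2 \l = -\l \bg$ (equivalently $\Delta_{\bg}\l = -n\l$ and $\Hess \l + \l\bg = 0$), which is precisely the condition making the linearized-scalar-curvature operator formally self-adjoint when paired against $\l$. Using this, the term $\int_\Omega (-\Delta_{\bg}\th)\l$ integrates by parts twice to give $\int_\Omega \th(-\Delta_{\bg}\l) + (\text{boundary}) = \int_\Omega n\th\l + (\text{boundary terms on }\Sigma)$, which combines with $-\int_\Omega (n-1)\th\l$. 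The boundary terms arising here will involve $\p_{\bnu}\th$, $\p_{\bnu}\l$, and $\th$ on $\Sigma$; these must be matched against the mean curvature difference $H(g) - H(\bg)$. The standard fact is that under the boundary condition that $g$ and $\bg$ induce the same metric on $\Sigma$, one has $2[H(g) - H(\bg)] = -\p_{\bnu}\th - \div_\Sigma X + (\text{quadratic in } h)$ up to the error $F(h)$ of order $|h|^3 + |h|^2|\bD h|$; substituting this converts the raw boundary integrals into the $(2 - h(\bnu,\bnu))[H(g)-H(\bg)]\l$ term on the left plus the explicit boundary quadratic forms $-\tfrac14 h(\bnu,\bnu)^2 - \tfrac{n}{2(n-1)}|X|^2$ weighted by $H(\bg)$ and $-h(\bnu,\bnu)^2 - \tfrac12|X|^2$ weighted by $\p_{\bnu}\l$ on the right.

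The computation of the quadratic bulk term is the main obstacle: one must expand $\tfrac12 D^2R|_{\bg}(h,h)$, integrate against $\l$, integrate by parts using $\bD^2\l = -\l\bg$ repeatedly, and organize the resulting mess of contractions of $\bD h$ with itself, $h$ with $h$, and cross terms into exactly the combination $-\tfrac14(|\bD h|^2 + |\bnabla\th|^2) - \tfrac12(|h|^2 + \th^2)$ times $\l$, with everything else either a boundary term or part of $E(h)$/$F(h)$. This is delicate because many integrations by parts generate curvature terms (from commuting covariant derivatives on $\mathbb{S}^n$, using $\bar R_{ijkl} = \bg_{ik}\bg_{jl} - \bg_{il}\bg_{jk}$) and additional boundary contributions, and one has to verify that all the curvature-generated terms conspire to produce precisely the coefficients $\tfrac14$ and $\tfrac12$ shown. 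I would handle this by first doing the Euclidean-like algebra assuming $\bD\l$ were parallel, then systematically adding the corrections coming from $\bD^2\l = -\l\bg$, and cross-checking the final coefficients against the known flat-space and Schwarzschild analogues (as in Shi–Tam and the Brendle–Marques argument). The boundary bookkeeping — ensuring the $\p_{\bnu}\l$-weighted and $H(\bg)$-weighted terms come out with the stated coefficients and that the decomposition $h|_\Sigma$, $X$, $h(\bnu,\bnu)$ exhausts all boundary data — is the other place where care is needed, but it is routine once the bulk identity is pinned down; the cubic error bounds $|E(h)| \le C(|h|^3 + |\bD h|^3)$ and $|F(h)| \le C(|h|^3 + |h|^2|\bD h|)$ follow from Taylor's theorem with the smallness assumption $|h| \le \tfrac12$ controlling the denominators in the expansion of $\sqrt{\det g}$ and $g^{-1}$.
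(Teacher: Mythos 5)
Note first that the paper does not prove Theorem \ref{thm-BM-est} at all: it is imported from \cite{BrendleMarques} and only its conclusion is used, so the relevant comparison is with the original Brendle--Marques computation. Your outline does follow that computation's architecture (second-order expansion of $R(g)$ and $H(g)$ in $h$, pairing with the static potential $\l=\cos r$ via $\bD^2\l=-\l\,\bg$, integration by parts, trading boundary terms for $H(g)-H(\bg)$, cubic remainders into $E(h)$, $F(h)$), and your formula $DR_{\bg}(h)=-\Delta_{\bg}\th-(n-1)\th$ under $\div(h)=0$ agrees with \eqref{eq-reduced-linear-R}. But as a proof there is a genuine gap precisely where the content of the theorem lies: the statement is a sharp identity whose exact coefficients ($\tfrac14$, $\tfrac12$, $\tfrac{n}{2(n-1)}$, and the weight $(2-h(\bnu,\bnu))\l$ on the mean-curvature term) are what later produce the thresholds \eqref{eq-BM-est-1}--\eqref{eq-BM-est-2}, and you explicitly defer the entire quadratic bookkeeping, in the bulk and on the boundary, to be ``organized'' and ``cross-checked''. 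That bookkeeping is the proof; asserting that the terms ``conspire'' to give these coefficients, with a consistency check against flat-space analogues, does not establish the identity. In particular, attributing the bulk terms $-\tfrac12(|h|^2+\th^2)\l$ to $\tfrac12 D^2R_{\bg}(h,h)$ alone is too quick: in the expansion \eqref{eq-linearR} they emerge only after combining $-\tfrac12 DR_{\bg}(h^2)$, $\la h,\bD^2\th\ra$, the ambient curvature term $\tfrac12 h^{ij}h^{kl}\overline R_{ikjl}$, and the integrations by parts against $\l$; moreover the divergence-form error $\bD_i(E_1^i(h))$ there is what generates the boundary error $F(h)$ with its $|h|^2|\bD h|$ growth, a structure your generic Taylor-remainder argument does not account for.

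There is also a concrete error in the one boundary formula you do write down. With the paper's conventions (outward normal $\bnu$, $H(\bg)=(n-1)\cot\delta>0$) and $\div(h)=0$, the linearization of the mean curvature is $2DH_{\bg}(h)=\p_{\bnu}\th-\div_\Sigma X$, as in \eqref{eq-dh}, whereas you use $2[H(g)-H(\bg)]=-\p_{\bnu}\th-\div_\Sigma X+\dots$. Since the whole point of inserting the weight $(2-h(\bnu,\bnu))\l$ in front of $H(g)-H(\bg)$ is to cancel the $\p_{\bnu}\th$ and $\div_\Sigma X$ boundary contributions coming from the bulk integrations by parts, this sign, taken literally, destroys the cancellation and you would not arrive at the stated boundary quadratic forms $-\tfrac14 h(\bnu,\bnu)^2-\tfrac{n}{2(n-1)}|X|^2$ and $-h(\bnu,\bnu)^2-\tfrac12|X|^2$. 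So: right strategy, but the central computation is missing and the one explicit ingredient supplied is incompatible with the conventions needed for the identity as stated.
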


To see how Theorem \ref{thm-BM} follows from Theorem \ref{thm-BM-est}, one first  pulls back $ g$ through a diffeomorphism $ \varphi $: $ \Omega \rightarrow \Omega$ with $ \varphi |_\Sigma = \text{\rm id} $
such that $ \varphi^*(g) - \bg $ is $ \bg $-divergence free and $ || \varphi^*(g) - \bg ||_{W^{2,p} (\Omega) }
\le N || g - \bg ||_{W^{2,p} (\Omega) } $  for some $ p>n$ and $ N$ depending only on $ \Omega$  (\cite[Proposition 11]{BrendleMarques}). Replacing $ g $ by $ \varphi^*(g) $, one assumes that $\div (h) = 0$,
where $ h = g - \bg $  and $ || h ||_{W^{2, p} (\Omega) } $ is small. If $ R(g) \ge n (n-1)$ and $ H(g) \ge H(\bg)$, Theorem \ref{thm-BM-est} then implies
\be \label{eq-BM-est-0}
\begin{split}
& \ \int_\Omega \lf[   \frac14 ( | \bD h |^2 + | \bnabla \th  |^2 ) + \frac12\lf(|h|^2+\th^2\ri)  \ri] \l
   \ \vbg \\
& \ + \int_\Sigma h(\bnu, \bnu)^2 \lf[ \frac14 H(\bg) \l + \p_{\bnu} \l \ri]
+  |X|^2  \lf[ \frac n{2(n-1)} H ({\bg}) \l  + \frac12 \p_{\bnu} \l  \ri]   \vsg  \\
\le & \   C || h ||_{C^1(\bar{\Omega})} \int_\Omega  \lf( | \bD h |^2 + | h |^2 \ri)  \vbg
\end{split}
\ee
for a constant  $ C $ independent on $ h$. At $ \Sigma$, direct calculation shows
\be \label{eq-BM-est-1}
 \frac14 H(\bg ) \l + \p_{\bnu} \l = \frac{(n+3) \cos^2\delta - 4 }{4 \sin \delta }
\ee
\be \label{eq-BM-est-2}
\frac n{2(n-1)} H ({\bg}) \l  + \frac12 \p_{\bnu} \l  = \frac{ ( n + 1 ) \cos^2 \delta - 1 }{ 2 \sin \delta } .
\ee
If
$ \cos \delta \ge { \frac{ 2}{\sqrt{n+3}} } $, then both quantities in \eqref{eq-BM-est-1} and \eqref{eq-BM-est-2}
are nonnegative. Therefore, \eqref{eq-BM-est-0} implies $ h = 0 $ if $ || h ||_{C^1( \bar{\Omega} ) } $
is sufficiently small.

\subsection{Improvement of Theorem \ref{thm-BM}: approach 1}\label{s-method 1}

Let $ \l $ and $ h  $ be given as in Theorem \ref{thm-BM-est}. Define
\be \label{eq-defofwh}
\begin{split}
W (h) = & \ \int_\Omega \lf[   \frac14 ( | \bD h |^2 + | \bnabla \th  |^2 ) + \frac12\lf(|h|^2+\th^2\ri)  \ri] \l
   \ \vbg \\
& \ + \int_\Sigma h(\bnu, \bnu)^2 \lf[ \frac14 H(\bg) \l + \p_{\bnu} \l \ri]
 + |X|^2  \lf[ \frac n{2(n-1)} H ({\bg}) \l  + \frac12 \p_{\bnu} \l  \ri]   \vsg  .
 \end{split}
\ee
It is clear from the above Brendle and Marques' proof
that Theorem \ref{thm-BM} holds  on  a geodesic ball $ \Omega = B(\delta)$
provided one can prove
\be \label{eq-BM-est-hope}
W (h) \ge    \ep \int_\Omega  \lf( | \bD h |^2 + | h |^2 \ri)  \vbg
\ee
for some positive $ \epsilon $ independent on $h$.
To show \eqref{eq-BM-est-hope},
the difficulty lies in handling the boundary integral
$$ \int_\Sigma h(\bnu, \bnu)^2 \lf[ \frac14 H(\bg) \l + \p_{\bnu} \l \ri]
+ |X|^2  \lf[ \frac n{2(n-1)} H ({\bg}) \l  + \frac12 \p_{\bnu} \l  \ri]   \vsg
$$
which can be  negative if $ \cos \delta$ is small.

\begin{prop} \label{prop-bdry-1}
Let $h$ be any $C^2$ symmetric (0,2) tensor on $\Omega = B(\delta)$
with $ \div (h) = 0 $. Let $ c = \cos \delta $ and $ s = \sin \delta$.
 Given any positive  function $w$ on $\Omega$, we have
 \begin{equation}
 \label{eq-trh-est-3}
s\int_\S \th h (\bnu,\bnu)\vsg\le \int_\Omega\lf[  \frac w2\sqrt{1-\l^2}|h|^2+\l \th^2
+\frac1{2w}\sqrt{1-\l^2}|\bD\th|^2\ri]\vbg.
\end{equation}
In particular, if $h|_{T(\S)}=0$, then
 \begin{equation}
 \label{eq-trh-est-4}
s\int_\S h(\bnu, \bnu)^2 \vsg \le  \int_\Omega\lf[  \frac w2\sqrt{1-\l^2}|h|^2+\l \th^2
 +\frac1{2w}\sqrt{1-\l^2}|\bD\th|^2\ri]\vbg.
\end{equation}
\end{prop}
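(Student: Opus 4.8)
The plan is to prove \eqref{eq-trh-est-3} by the divergence theorem applied to a judiciously chosen vector field, and then deduce \eqref{eq-trh-est-4} as the special case where $X=0$ forces $h(\bnu,\cdot)|_{T(\Sigma)}=0$, so that the full normal part of $h$ on $\Sigma$ is just $h(\bnu,\bnu)\bnu^\flat\otimes\bnu^\flat$ and $\th h(\bnu,\bnu)$ can be compared to $h(\bnu,\bnu)^2$. The key observation is that, since $r$ is the $\bg$-distance to the center, $\l=\cos r$ satisfies $\bD^2\l = -\l\,\bg$ (so $\Delta_{\bg}\l = -n\l$) and $|\bD\l|^2 = 1-\l^2 = \sin^2 r$; on $\Sigma=\partial B(\delta)$ we have $\l = c$, $|\bD\l| = s$, and $\bD\l/|\bD\l| = \bnu$, so $\partial_{\bnu}\l = -s\cdot\text{(curvature terms)}$—more precisely $\bD\l = -\sin r\,\bnu_r$ where $\bnu_r$ is the radial unit vector, giving $\partial_{\bnu}\l = -s$ on $\Sigma$. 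Thus a boundary integrand involving $h(\bnu,\bnu)$ times $s$ should arise naturally from integrating a divergence that produces a $\bD\l$-contraction against $h$ on $\Sigma$.

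Concretely, I would consider the one-form (or vector field) $\omega = \th\, h(\bD\l, \cdot)$, or equivalently $\omega_j = \th\, h_{ij}\bD^i\l$, and compute $\div(\omega)$ over $\Omega$. Expanding,
\[
\div\bigl(\th\, h(\bD\l,\cdot)\bigr) = \th\,\div\bigl(h(\bD\l,\cdot)\bigr) + h(\bD\l, \bD\th).
\]
Using $\div(h)=0$ one gets $\div\bigl(h(\bD\l,\cdot)\bigr) = \langle h, \bD^2\l\rangle = -\l\,\tr(h) = -\l\,\th$, since $\bD^2\l = -\l\bg$. Hence
\[
\div\bigl(\th\, h(\bD\l,\cdot)\bigr) = -\l\,\th^2 + h(\bD\l,\bD\th).
\]
By the divergence theorem the left side integrates to a boundary term $\int_\Sigma \th\, h(\bD\l,\bnu)\,\vsg$; and since $\bD\l$ on $\Sigma$ is $-s\bnu$ (the radial direction agreeing with the outward normal, up to the sign from $\partial_{\bnu}\l=-s$), this is $-s\int_\Sigma \th\, h(\bnu,\bnu)\,\vsg$. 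Rearranging yields
\[
s\int_\Sigma \th\, h(\bnu,\bnu)\,\vsg = \int_\Omega \bigl[\l\,\th^2 - h(\bD\l,\bD\th)\bigr]\,\vbg.
\]
Finally I would bound the cross term $-h(\bD\l,\bD\th)$ pointwise by Cauchy–Schwarz with a weight: $|h(\bD\l,\bD\th)| \le |h|\,|\bD\l|\,|\bD\th| = |h|\sqrt{1-\l^2}\,|\bD\th| \le \frac{w}{2}\sqrt{1-\l^2}\,|h|^2 + \frac{1}{2w}\sqrt{1-\l^2}\,|\bD\th|^2$ for any $w>0$, which gives exactly \eqref{eq-trh-est-3}. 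For \eqref{eq-trh-est-4}, the hypothesis $h|_{T(\Sigma)}=0$ together with $\div(h)=0$... actually more simply, if $h|_{T(\Sigma)}=0$ then $\tr h$ restricted to $\Sigma$ equals $h(\bnu,\bnu)$ (the tangential trace vanishes), so $\th h(\bnu,\bnu) = h(\bnu,\bnu)^2$ on $\Sigma$ and the refined inequality drops out of \eqref{eq-trh-est-3}.

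The step I expect to require the most care is the boundary computation: correctly identifying $\bD\l$ along $\Sigma$ with $-s\,\bnu$ (sign and magnitude), and confirming $\bD^2\l = -\l\,\bg$ on all of $B(\delta)$—this is the standard Obata-type identity for $\l=\cos r$ on the round sphere, valid because $B(\delta)$ is a geodesic ball, but one should be careful it holds up to and including $\Sigma$ and that $\l>0$ there (which needs $\delta<\pi/2$, as assumed). A secondary subtlety is making sure the divergence theorem applies with only $C^2$ regularity of $h$, which is fine since $\div(h)=0$ is assumed and all terms are then continuous. Everything else is Cauchy–Schwarz and is routine.
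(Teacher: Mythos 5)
Your proposal is correct and follows essentially the same route as the paper: the paper also integrates the divergence of $\omega_k=\th\, h_{ik}\bD^i\l$, using $\bD^2\l=-\l\,\bg$, $\div(h)=0$, and $\omega(\bnu)=-s\,\th\, h(\bnu,\bnu)$ on $\Sigma$, then applies the weighted Cauchy--Schwarz inequality, and deduces \eqref{eq-trh-est-4} from $\tr h=h(\bnu,\bnu)$ on $\Sigma$ when $h|_{T(\Sigma)}=0$. No substantive differences.
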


\begin{proof}
Let $\omega$ be the $1$-form on $ \Omega$ given by
$$ \omega_k = \th h_{ik} \bD^i \l  .$$
Using the fact $\bD_k\bD^i\l=-\l\delta_k^i$ and the assumption  $\div (h)=0$, we have
$$
\bD^k \omega_k = - \l \th^2 + h(\bD\l,\bD \th).
$$
At $\S$,  $\omega(\bnu)=-s \th h(\bnu,\bnu)$. It follows from the divergence theorem
\be \label{eq-beforeholder}
s\int_\S \th  h(\bnu,\bnu)\vsg=\int_\Omega \lf[\l \th^2- h(\bD\l,\bD \th)\ri]\vbg.
\ee
Given any positive function $w$ on $ \Omega$, using the fact $ | \bD \l  |^2 = 1 - \l^2 $, we have
\begin{equation} \label{eq-holder}
\begin{split}
- h(\bD\l,\bD \th)\le &|\bD\l|\,|h|\, |\bD \th |\\
\le &\sqrt{1-\l^2}\lf[\frac w2|h|^2+\frac1{2w}|\bD\th |^2\ri].
\end{split}
\end{equation}
Thus, \eqref{eq-trh-est-3} follows from \eqref{eq-beforeholder} and \eqref{eq-holder}. If $h|_{T(\S)}=0$, 
$h(\bnu,\bnu)=  \tr \hspace{.05cm} h $ at $ \Sigma$. Therefore, \eqref{eq-trh-est-3} implies \eqref{eq-trh-est-4}.
\end{proof}

\begin{thm}\label{thm-method-1}
Let $ \delta $ be a constant in $(0, \frac{\pi}{2})$.
 Suppose $ \cos \delta > \zeta$, where $ \zeta $ is the positive 
 constant given by 
 \be \label{eq-thmii-condition}
 \zeta^2 =
 \lf\{
 \begin{array}{lcl}
 \frac{2}{n+1} & \  \mathrm{if} \ n \le 4 \\
 \frac{ 4 ( n+4) - 4 \sqrt{2n -1} }{ n^2 + 6 n + 17 } & \ \ \mathrm{if} \ n \ge 5  .
 \end{array}
 \ri.
 \ee
 Then the conclusion of Theorem \ref{thm-BM} holds on $ B(\delta)$.
\end{thm}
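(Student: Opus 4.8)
The plan is to establish inequality \eqref{eq-BM-est-hope}, namely $W(h)\ge\epsilon\int_\Omega(|\bD h|^2+|h|^2)\vbg$ for some $\epsilon>0$ independent of $h$, after which the conclusion of Theorem \ref{thm-BM} on $B(\delta)$ follows verbatim from the reduction recalled in Section \ref{s-review}. Write $c=\cos\delta$, $s=\sin\delta$, and recall from \eqref{eq-BM-est-1}--\eqref{eq-BM-est-2} that the two boundary coefficients in \eqref{eq-defofwh} are $\frac{(n+3)c^2-4}{4s}$ and $\frac{(n+1)c^2-1}{2s}$. A preliminary computation shows that $\cos\delta>\zeta$ forces $c^2>\frac1{n+1}$ in every dimension, so the coefficient of $|X|^2$ is nonnegative and that boundary term may be discarded. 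If moreover $c^2\ge\frac4{n+3}$, then the coefficient of $h(\bnu,\bnu)^2$ is nonnegative as well, and \eqref{eq-BM-est-hope} holds trivially with $\epsilon=\tfrac c4$ because $\l=\cos r\ge c$ on $\Omega$. Hence the genuine work is confined to the regime $\zeta^2<c^2<\frac4{n+3}$, where the coefficient of $h(\bnu,\bnu)^2$ is negative; write it as $-\beta s$ with $\beta=\frac{4-(n+3)c^2}{4s^2}>0$.

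In that regime I would invoke Proposition \ref{prop-bdry-1}. Since $g$ and $\bg$ induce the same metric on $\Sigma$, we have $h|_{T(\Sigma)}=0$, so \eqref{eq-trh-est-4} applies, and for any positive constant $w$ it gives
\[
-\beta s\int_\Sigma h(\bnu,\bnu)^2\vsg\ \ge\ -\beta\int_\Omega\left[\tfrac w2\sqrt{1-\l^2}\,|h|^2+\l\,\th^2+\tfrac1{2w}\sqrt{1-\l^2}\,|\bD\th|^2\right]\vbg.
\]
Substituting this into \eqref{eq-defofwh} leaves a single interior integral whose integrand is a quadratic form in $|\bD h|^2,|\bD\th|^2,|h|^2,\th^2$ with $\l$-dependent coefficients. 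I would choose $w$ so that the $|\bD\th|^2$-coefficient $\frac\l4-\frac{\beta}{2w}\sqrt{1-\l^2}$ is nonnegative for every $\l\in[c,1]$; the natural (smallest admissible) choice is $w=\frac{2\beta s}{c}$, the extreme case being $\l=c$, where one uses $\l^2\ge c^2\iff \l s\ge c\sqrt{1-\l^2}$. With this $w$ the $|\bD\th|^2$-term is dropped. The coefficient of $\th^2$ is $(\tfrac12-\beta)\l$, which is nonnegative exactly when $\beta\le\tfrac12$, i.e. when $c^2\ge\frac2{n+1}$ — and this is precisely the source of the case split in \eqref{eq-thmii-condition}. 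Assuming $\beta\le\tfrac12$, the $\th^2$-term is discarded too, and one is reduced to showing that $\frac\l4|\bD h|^2+\left(\frac\l2-\frac{\beta^2 s}{c}\sqrt{1-\l^2}\right)|h|^2$ dominates $\epsilon(|\bD h|^2+|h|^2)$. Both coefficients are increasing in $\l$, so it suffices to test $\l=c$, where the requirement reduces to $c^2>2\beta^2 s^2$.

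Finally I would unwind $c^2>2\beta^2 s^2$: inserting the formula for $\beta$ and squaring, this is equivalent to
\[
(n^2+6n+17)(\cos^2\delta)^2-8(n+4)\cos^2\delta+16<0,
\]
a quadratic whose discriminant reduces (since $(n+4)^2-(n^2+6n+17)=2n-1$) to give roots $\frac{4(n+4)\pm4\sqrt{2n-1}}{n^2+6n+17}$; one also checks that $\frac4{n+3}$ lies strictly between these roots, so the constraint is simply $\cos^2\delta>\frac{4(n+4)-4\sqrt{2n-1}}{n^2+6n+17}$. For $n\ge5$ one verifies $\frac{4(n+4)-4\sqrt{2n-1}}{n^2+6n+17}\ge\frac2{n+1}$, so the auxiliary condition $\beta\le\tfrac12$ is automatic and the sharp threshold is exactly this quantity; for $n\le4$ the condition $\beta\le\tfrac12$ is the binding one, yielding $\cos^2\delta>\frac2{n+1}$. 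In either case \eqref{eq-BM-est-hope} holds with $\epsilon=\min\{\tfrac c4,\ \tfrac{c^2-2\beta^2 s^2}{2c}\}>0$, completing the proof via Section \ref{s-review}. The main obstacle is the middle step: selecting $w$ and verifying that the leftover quadratic form is uniformly positive, and in particular realizing that the sign of $\tfrac12-\beta$ is what compels the separate treatment of small and large $n$.
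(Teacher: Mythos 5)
Your proposal is correct and follows essentially the same route as the paper: reduce to \eqref{eq-BM-est-hope}, absorb the negative $h(\bnu,\bnu)^2$ boundary term through Proposition \ref{prop-bdry-1}, evaluate the resulting interior coefficients at the worst case $\l=c$, and solve the quadratic in $\cos^2\delta$ together with the condition $c^2\ge\frac{2}{n+1}$ coming from the $\th^2$ term, which is exactly what forces the case split in \eqref{eq-thmii-condition}. The only difference is your weight $w=2\beta s/c$ (chosen to annihilate the $|\bD\th|^2$ contribution) versus the paper's fixed $w=\sqrt2$, and both choices yield the identical conditions \eqref{eq-lmaii-1}--\eqref{eq-lmaii-2}.
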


\begin{proof}
Let $c=\cos\delta$. Note that \eqref{eq-thmii-condition} implies $ c^2 \ge \frac{1}{{n+1}} $, hence the coefficient
of $ | X|^2 $ in \eqref{eq-defofwh}  is nonnegative.
 By Theorem \ref{thm-BM}, it suffices to assume
$ c^2 < \frac{4}{n+3} $.
Choosing $ w = \sqrt{2} $ in Proposition \ref{prop-bdry-1}, we have
\begin{equation}
\label{eq-BM-est-method1-2-2}
\begin{split}
W(h) \ge & \ \int_\Omega \lf[   \frac14 ( | \bD h |^2 + | \bnabla \th  |^2 ) + \frac12 \lf(|h|^2+\th^2\ri)  \ri] \l
   \ \vbg \\
& \ + \frac{ (n+3)c^2 - 4 }{4(1-c^2)}   \sqrt{2 (1-c^2) }     \int_\Omega
\lf(  \frac12 |h|^2 + \frac14 | \bD \th |^2 \ri) \vbg \\
&  + \frac{ (n+3)c^2 - 4 }{4(1-c^2)}  \int_\Omega \l \th^2 \vbg .
\end{split}
\end{equation}
We seek conditions on $ c$ such that
\be \label{eq-lmaii-1}
c + \frac{ (n+3)c^2 - 4 }{4(1-c^2)}   \sqrt{2 (1-c^2) }  > 0
\ee
and
\be \label{eq-lmaii-2}
\frac12 +  \frac{ (n+3)c^2 - 4 }{4(1-c^2)}  \ge 0  .
\ee
Direct calculation shows that \eqref{eq-lmaii-1} (under the assumption $ c^2 < \frac{4}{n+3} $) is equivalent to
\be
c^2 > \frac{ 4 ( n+4) - 4 \sqrt{2n -1} }{ n^2 + 6 n + 17 }
\ee
and
 \eqref{eq-lmaii-2} is equivalent to
\be
c^2 \ge \frac{2}{n+1} .
\ee
Since
\be \label{eq-nge5}
\frac{ 4 ( n+4) - 4 \sqrt{2n -1} }{ n^2 + 6 n + 17 }  \ge \frac{2}{n+1}
\ee
precisely when $ n \ge 5$,  we conclude that
\eqref{eq-BM-est-hope} holds for some $ \ep > 0$ if \eqref{eq-thmii-condition} is satisfied.  Theorem \ref{thm-method-1} is proved.
\end{proof}

Theorem \ref{thm-method-1}  verifies condition ({\bf a}) in the introduction
for $ n \ge 5$. The remaining case $n = 3, 4$ in condition ({\bf a}) will be verified
in section \ref{sec-combined}.

\subsection{Improvement of Theorem \ref{thm-BM}: approach 2}\label{s-method 2}
In this section, we give a different approach to  estimate the boundary integral  of
$ \th^2  $ in  $W(h)$ in terms of the interior integral in $W(h)$. To do so,  we use the linearization of the scalar curvature \eqref{eq-reduced-linear-R}. Noticing that the integral of $\tr \hspace{.5mm} h$ over $B(\delta)$ is close to zero, we  apply the Poincar\'e inequality  through an estimate of the first nonzero Neumann eigenvalue of $B(\delta)$ in \cite{MiaoTam}.

\begin{lma}\label{lma-trace-est-1}
Let $\Omega \subset \mathbb{S}^n $ be a closed domain  with smooth boundary $\S$.
Let $ \bg $ be the standard metric on $ \mathbb{S}^n$ and
 $g=\bg+h$ be another  smooth metric on  $\Omega $
such that $g$, $\bg$ induce the same metric on $\S$ and $\divh=0$. Suppose $ | h | $ is very small, say
$ | h | \le \frac12 $ at every point.
 \begin{enumerate}
             \item [(i)]  Given any  smooth function $ f $ on $ \Omega$, one has
             \begin{equation*}
             \begin{split}
&   \int_\Omega  f  \th \Delta_{\bg} \th+(n-1) f   \th^2 \ \vbg \\
  =&\int_\Omega f  \th \lf[ R(\bg)-R(g) \ri] \vbg+ E(h, f)
  \end{split}
  \end{equation*}
  where
  $$ | E (h, f) | \le C || f ||_{C^1 ({\overline{\Omega}} )}  \lf( \int_\Omega \lf(|h|^3+|\bD h|^3\ri) \vbg   + \int_\Sigma   |h|^2 |\bD h| \vsg \ri) $$
for a positive constant $C$ depending only on $(\Omega,\bg)$.

\vh
             \item [(ii)]
    \begin{equation*}
             \begin{split}
  \int_\Omega \th \vbg=& - \frac1{n-1}  \lf( \int_\Omega  \lf[ R(g)-R(\bg) \ri]  \vbg \ri. \\
& \lf.   +  2 \int_\S \lf[ H(g)-H(\bg) \ri]   \vsg \ri)
  +F(h)
  \end{split}
  \end{equation*}
  where
  $$ | F(h) | \le C \lf( \int_\Omega \lf(|h|^2+|\bD h|^2\ri) \vbg
  + \int_\Sigma  ( |h|^2 + |h |  |\bD h| ) \vsg   \ri)$$
       for a positive constant $C$ depending only on $(\Omega,\bg)$.
             \end{enumerate}
\end{lma}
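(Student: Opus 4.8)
The plan is to derive both identities from the reduced linearization of scalar curvature \eqref{eq-reduced-linear-R}. Under the hypotheses $\divh=0$ and $\Ric(\bg)=(n-1)\bg$, that formula reads
\[
R(g)-R(\bg) = -\Delta_{\bg}\th-(n-1)\,\th + Q(h),
\]
with $Q(h)$ at least quadratic in $(h,\bD h)$. The structural fact I will use repeatedly is that $R$ depends \emph{linearly} on the second derivatives of the metric, so every $\bD^2h$ occurring in $Q(h)$ carries an undifferentiated factor of $h$; integrating by parts once, one may therefore write $Q(h)=\div(V[h])+P[h]$, where $V[h]$ and $P[h]$ are built algebraically from $(h,\bD h)$ with $|V[h]|\le C|h|\,|\bD h|$ and $|P[h]|\le C(|h|^2+|\bD h|^2)$ pointwise (using $|h|\le\frac12$), and $C$ depends only on $(\Omega,\bg)$.

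For part (i), I would multiply this identity by $f\,\th$, integrate over $\Omega$, and rearrange (using $R(\bg)-R(g)=\Delta_{\bg}\th+(n-1)\th-Q(h)$); this produces the stated identity with
\[
E(h,f)=\int_\Omega f\,\th\,Q(h)\ \vbg.
\]
To estimate $E(h,f)$ I would substitute $Q(h)=\div(V[h])+P[h]$ and integrate the divergence term by parts, which yields an interior term $-\int_\Omega\bD(f\th)\cdot V[h]\,\vbg$, a boundary term $\int_\S f\,\th\,V[h](\bnu)\,\vsg$, and the interior term $\int_\Omega f\,\th\,P[h]\,\vbg$. Using $|\th|\le n|h|$, $|\bD(f\th)|\le C\|f\|_{C^1(\ol\Omega)}(|h|+|\bD h|)$, $|h|\le\frac12$, and Young's inequality to absorb the mixed cubic quantities $|h|^2|\bD h|$ and $|h|\,|\bD h|^2$ into $|h|^3+|\bD h|^3$, the two interior terms are bounded by $C\|f\|_{C^1(\ol\Omega)}\int_\Omega(|h|^3+|\bD h|^3)\,\vbg$ and the boundary term by $C\|f\|_{C^0(\ol\Omega)}\int_\S|h|^2|\bD h|\,\vsg$, which is the asserted estimate.

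For part (ii), I would instead integrate the linearization over $\Omega$ and use $\int_\Omega\Delta_{\bg}\th\,\vbg=\int_\S\p_{\bnu}\th\,\vsg$, obtaining
\[
\int_\Omega[R(g)-R(\bg)]\ \vbg = -\int_\S\p_{\bnu}\th\ \vsg-(n-1)\int_\Omega\th\ \vbg+\int_\Omega Q(h)\ \vbg,
\]
where $\left|\int_\Omega Q(h)\,\vbg\right|\le C\left(\int_\Omega(|h|^2+|\bD h|^2)\,\vbg+\int_\S|h|\,|\bD h|\,\vsg\right)$ by the same integration by parts (now with $f\equiv1$). It then remains to trade $\int_\S\p_{\bnu}\th\,\vsg$ for $2\int_\S[H(g)-H(\bg)]\,\vsg$ up to a controlled error, and for this I would combine two pointwise computations on $\S$. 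First, expanding the mean curvature of $\S$ in $(\Omega,g)$ to first order in $h$ --- using $h|_{T(\S)}=0$, the expression $\nu_g=(1-\frac12 h(\bnu,\bnu))\bnu-X+O(|h|^2)$ for the unit $g$-normal, and a $\bg$-orthonormal frame $\{e_a\}$ adapted to $\S$ --- gives $H(g)-H(\bg)$ as a linear combination of $\sum_a(\bD_{\bnu}h)(e_a,e_a)$, $\div_\S X$ and $H(\bg)\,h(\bnu,\bnu)$, plus a remainder $Q_\S(h)$ with $|Q_\S(h)|\le C(|h|^2+|h|\,|\bD h|)$ (since $H$ involves the metric only to first order in its derivatives). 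Second, the $\bnu$-component of the equation $\divh=0$ at $\S$, together with $\bD_{\bnu}\bnu=0$ and the Gauss--Weingarten equations for $\S$, gives $\p_{\bnu}(h(\bnu,\bnu))=-\div_\S X-H(\bg)\,h(\bnu,\bnu)$. Since $\p_{\bnu}\th=\p_{\bnu}(h(\bnu,\bnu))+\sum_a(\bD_{\bnu}h)(e_a,e_a)$ on $\S$, substituting this into the mean-curvature expansion cancels all the $h(\bnu,\bnu)$ terms and leaves
\[
2[H(g)-H(\bg)]=\p_{\bnu}\th-\div_\S X+2Q_\S(h).
\]
Integrating over the closed hypersurface $\S$ kills $\div_\S X$, so $\int_\S\p_{\bnu}\th\,\vsg=2\int_\S[H(g)-H(\bg)]\,\vsg-2\int_\S Q_\S(h)\,\vsg$; inserting this into the displayed integral identity and solving for $\int_\Omega\th\,\vbg$ gives the formula in (ii) with $F(h)=(n-1)^{-1}\left(2\int_\S Q_\S(h)\,\vsg+\int_\Omega Q(h)\,\vbg\right)$, from which the stated bound on $|F(h)|$ follows.

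The $Q(h)$-bookkeeping --- one integration by parts plus Young's inequality --- is routine; the main obstacle is the pair of boundary computations, namely the first-order expansion of $H(g)$ under the constraint $h|_{T(\S)}=0$ and the normal trace of $\divh=0$ at $\S$. One has to carry these out carefully enough to verify that, once combined, every first-order boundary contribution other than the tangential divergence $\div_\S X$ (which integrates to zero) cancels, and that the surviving boundary remainders are of exactly the claimed shape --- $\int_\S|h|^2|\bD h|$ in (i), arising from the term $\int_\S f\,\th\,V[h](\bnu)\,\vsg$, and $\int_\S(|h|^2+|h|\,|\bD h|)$ in (ii), arising from $Q_\S(h)$ and from the boundary term of $\int_\Omega Q(h)\,\vbg$.
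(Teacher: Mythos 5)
Your proposal is correct and follows essentially the same route as the paper: multiply (respectively integrate) the reduced linearized equation $-\Delta_{\bg}\th-(n-1)\th=DR_{\bg}(h)$, control the quadratic remainder of $R(g)-R(\bg)$ after one integration by parts, and for (ii) use $\div (h)=0$ and $h|_{T(\S)}=0$ to reduce $\int_\S \p_{\bnu}\th\,\vsg$ to $2\int_\S[H(g)-H(\bg)]\,\vsg$ modulo $\div_\S X$ (which integrates to zero) and a quadratic boundary error. The only difference is that the paper simply cites Brendle--Marques' Propositions 4 and 5 (and Miao--Tam) for the expansions of $R$ and $H$ and the identity $2DH_{\bg}(h)=\p_{\bnu}\th-\div h(\bnu)-\div_\S X$, whereas you sketch direct derivations of these same facts, with the resulting identities and error bounds agreeing with the paper's.
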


\begin{proof} Since $ \div (h) = 0 $ and $ \Ric(\bg) = (n-1) \bg$, $h$ satisfies
 \be \label{eq-reduced-linear-R}
 - \Delta_{\bg}  \th - ( n-1) \th  =   DR_{\bg} (h) ,
 \ee
 where $ DR_{\bg} (\cdot) $ denotes the linearization of the scalar curvature at $ \bg$.
By \cite[Proposition 4]{BrendleMarques} (also see  \cite[Lemma 2.1]{MiaoTam}),
one knows
\begin{equation} \label{eq-linearR}
 \begin{split}
 R(g) - R (\bg) & = DR_{\bg}(h)-\frac12DR_{\bg}(h^2)+\la h,\bD^2\th\ra \\
&   - \frac14 \lf(| \bD h |^2 +  | \bD (\tr_{\bg} h) |^2\ri)+\frac12h^{ij}h^{kl}\overline R_{ikjl} \\
& +E(h) +\bD_i(E_1^i(h))
 \end{split}
 \end{equation}
 where $ E(h) $ is a function and $ E_1(h)$ is a vector field on $ \Omega$
 satisfying
 $$ | E (h) | \le C ( | h| | \bD h |^2 + | h|^3 ), \ \
  | E_1 (h) | \le C  |h|^2 |\bD h|   $$
for a  positive constant $ C$ depending only on $ n$.
Multiplying  \eqref{eq-reduced-linear-R} by $ f \th$ and
integrating by parts, (i) follows from \eqref{eq-linearR}.

To prove {(ii)}, we  integrate  \eqref{eq-reduced-linear-R} on $ \Omega$ to get
\be
- (n-1) \int_\Omega \th \vbg = \int_\Omega DR_{\bg} (h) \vbg
+ \int_\Sigma \p_{\bnu} \th \ \vsg .
\ee
Let $ DH_{\bg} (h) $ denote the linearization of the mean curvature
of $ \Sigma$ at $ \bg$. Direct calculation (see \cite[Proposition 5]{BrendleMarques} or  \cite[(34)]{MiaoTam2009})
shows
\be \label{eq-dh}
2 DH_{\bg} (h)  = \p_{\bnu} \th - \divh(\bnu) - \div_\Sigma X .
\ee
Since  $ \div (h) = 0 $, \eqref{eq-dh} implies
\be
\int_\Sigma \p_{\bnu} \th \ \vsg = 2 \int_\Sigma DH_{\bg} (h) \vsg .
\ee
By \cite[Proposition 5]{BrendleMarques}, one has
\be \label{eq-linearH}
|H(g)-H(\bg)-DH_{\bg}(h)|\le C(|h|^2+|h||\bD h|)
\ee
for a positive constant $C$ depending only on $n$. (ii) now follows from \eqref{eq-linearR}-\eqref{eq-linearH}
and integration by parts on $ \Omega$.
\end{proof}

We will make use of the first nonzero Neumann eigenvalue of
$B(\delta) $, which we denote by $ \mu (\delta)$.
The next lemma on $ \mu (\delta) $ was proved in \cite[Lemma 3.1]{MiaoTam}.

\begin{lma}[\cite{MiaoTam}] \label{lma-mu-estimate}
Let $ \mu (\delta) $ be the first nonzero Neumann eigenvalue of $ B(\delta)$
 (with respect to $\bg$). Then
\begin{enumerate}
\item[(i)] $ \mu( \delta) $ is a strictly decreasing function of $ \delta$ on $(0, \frac{\pi}{2}]$;
\item[(ii)] for any $ 0< \delta< \frac{\pi}{2}$,
$$ \mu (\delta) > n + \frac{ (\sin \delta)^{n-2} \cos \delta}{ \int_0^\delta ( \sin  t)^{n-1} d t }
>  \frac{n}{ (\sin \delta)^{2} }.$$
\end{enumerate}
\end{lma}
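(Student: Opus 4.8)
The plan is to reduce both statements to a one-dimensional Sturm--Liouville problem via separation of variables. In geodesic polar coordinates about the center of $B(\delta)$ the metric is $\bg = dr^2 + \sin^2 r\, g_{\mathbb{S}^{n-1}}$, and expanding a Neumann eigenfunction in spherical harmonics on the geodesic spheres shows that the Neumann spectrum of $B(\delta)$ is the union over $k\ge 0$ of the spectra of the radial operators
$$ L_k f = -\frac{1}{(\sin r)^{n-1}}\big((\sin r)^{n-1}f'\big)' + \frac{k(k+n-2)}{\sin^2 r}\,f $$
on $(0,\delta)$, with weight $(\sin r)^{n-1}$, the regularity condition at $r=0$, and $f'(\delta)=0$. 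Since $k\mapsto k(k+n-2)$ is increasing, the first eigenvalue of $L_k$ is nondecreasing in $k$ for $k\ge 1$; and for $k=0$ the first eigenvalue is $0$ (constants), while I would show the second one equals the first eigenvalue of $L_1$ with the \emph{Dirichlet} condition $f(\delta)=0$ (if $\psi$ is the monotone second Neumann eigenfunction of $L_0$ then $w=\psi'$ is an $L_1$-eigenfunction with $w(0)=w(\delta)=0$, as one sees from the ladder identities $L_0=-B\frac{d}{dr}$, $L_1=-\frac{d}{dr}B$ with $B=\frac{d}{dr}+(n-1)\cot r$), which exceeds the corresponding Neumann eigenvalue. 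Hence $\mu(\delta)$ is the first eigenvalue of
$$ -\big((\sin r)^{n-1}f'\big)' + (n-1)(\sin r)^{n-3}f = \mu\,(\sin r)^{n-1}f \ \text{ on }(0,\delta),\qquad f(0)=0,\ f'(\delta)=0, $$
with a positive eigenfunction $f_\delta$. Setting $g=(\sin r)^{n-1}f_\delta'$ one gets $g'=(\sin r)^{n-3}\big[(n-1)-\mu(\delta)\sin^2 r\big]f_\delta$; since $g(0)=g(\delta)=0$ and $g$ is first increasing and then (once $\sin^2 r$ exceeds $(n-1)/\mu(\delta)$) decreasing, it follows that $g>0$ on $(0,\delta)$, hence $f_\delta$ is strictly increasing, that $\mu(\delta)\sin^2\delta>n-1$, and that $f_\delta''(\delta)<0$.

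For part (ii), I would compare with $\psi=\sin r$, which solves the same ODE with $\mu$ replaced by $n$. The Wronskian-type quantity $W=(\sin r)^{n-1}(f_\delta'\psi-f_\delta\psi')$ satisfies $W'=(n-\mu(\delta))(\sin r)^{n-1}f_\delta\psi$; integrating over $(0,\delta)$ and using $W(0)=0$, $W(\delta)=-(\sin\delta)^{n-1}\cos\delta\,f_\delta(\delta)$ yields
$$ \mu(\delta)-n = \frac{(\sin\delta)^{n-1}\cos\delta\,f_\delta(\delta)}{\int_0^\delta(\sin r)^n f_\delta(r)\,dr}. $$
Since $f_\delta$ is strictly increasing, $\int_0^\delta(\sin r)^n f_\delta\,dr < f_\delta(\delta)\int_0^\delta(\sin r)^n\,dr$, and since $\sin r\le\sin\delta$ on $[0,\delta]$ one has $\int_0^\delta(\sin r)^n\,dr\le\sin\delta\int_0^\delta(\sin r)^{n-1}\,dr$; combining these gives the first inequality $\mu(\delta)>n+\frac{(\sin\delta)^{n-2}\cos\delta}{\int_0^\delta(\sin t)^{n-1}\,dt}$. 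The last inequality reduces, after clearing denominators and cancelling $\cos\delta>0$, to $(\sin\delta)^n>n\cos\delta\int_0^\delta(\sin t)^{n-1}\,dt$; calling $G(\delta)$ the difference of the two sides, $G(0)=0$ and $G'(\delta)=n\sin\delta\int_0^\delta(\sin t)^{n-1}\,dt>0$, so $G>0$ on $(0,\frac{\pi}{2}]$.

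For part (i), I would use $f_\delta$ --- extended as a solution of the ODE when $\delta'>\delta$, restricted when $\delta'<\delta$ --- as a test function in the Rayleigh quotient of the above problem on $(0,\delta')$. An integration by parts together with the ODE gives the numerator as $(\sin\delta')^{n-1}f_\delta(\delta')f_\delta'(\delta')+\mu(\delta)\int_0^{\delta'}(\sin r)^{n-1}f_\delta^2\,dr$, whence
$$ \mu(\delta') \le \mu(\delta) + \frac{(\sin\delta')^{n-1}f_\delta(\delta')f_\delta'(\delta')}{\int_0^{\delta'}(\sin r)^{n-1}f_\delta^2\,dr}, $$
with equality at $\delta'=\delta$ because $f_\delta'(\delta)=0$. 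Since $\mu$ depends analytically on $\delta$ (rescale $(0,\delta)$ to a fixed interval and apply standard perturbation theory), differentiating this inequality at $\delta'=\delta$ --- where only the factor $f_\delta'(\delta')$ has nonzero derivative --- gives $\mu'(\delta)=\frac{(\sin\delta)^{n-1}f_\delta(\delta)f_\delta''(\delta)}{\int_0^\delta(\sin r)^{n-1}f_\delta^2\,dr}<0$, using $f_\delta(\delta)>0$ and $f_\delta''(\delta)<0$. Hence $\mu$ is strictly decreasing on $(0,\frac{\pi}{2}]$.

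The main obstacle is the first paragraph: establishing that the first nonzero Neumann eigenvalue of $B(\delta)$ is realized in the first ($k=1$) spherical-harmonic mode --- the spherical analogue of the classical fact for Euclidean balls --- and in particular that the second radial ($k=0$) eigenvalue does not compete with it. Once $\mu(\delta)$ is identified with the first eigenvalue of the $L_1$-problem above, the Sturm/Wronskian computation and the elementary integral estimates used in (ii) and (i) are routine.
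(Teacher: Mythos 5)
This lemma is not proved in the paper at all: it is quoted verbatim from \cite{MiaoTam} (Lemma 3.1 there), so there is no in-paper argument to compare your proposal against, and it has to be judged on its own. Judged so, your proof is essentially correct and self-contained. The step you single out as the main obstacle --- that $\mu(\delta)$ is realized in the $k=1$ spherical-harmonic mode --- is in fact already settled by your own ladder identity: if $\psi$ is any nonconstant radial Neumann eigenfunction with eigenvalue $\nu$, then $w=\psi'$ satisfies $L_1w=\nu w$ and vanishes at $r=0$ and $r=\delta$, so $\nu$ is a Dirichlet eigenvalue of the $L_1$ problem and hence $\nu\ge\lambda_1^{\mathrm{Dir}}(L_1)>\lambda_1^{\mathrm{Neu}}(L_1)$, the strict inequality holding because a first Dirichlet eigenfunction achieving the Neumann infimum would have to satisfy $f(\delta)=f'(\delta)=0$ and so vanish identically; note that no monotonicity of $\psi$ is needed here (your adjective ``monotone'' is not justified a priori, but it is also never used), and the $k\ge 2$ modes are disposed of by monotonicity of the potential exactly as you say. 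The Wronskian comparison with $\psi=\sin r$ (an $L_1$-solution with eigenvalue $n$), the sign analysis of $g=(\sin r)^{n-1}f_\delta'$ yielding $f_\delta'>0$, $\mu(\delta)\sin^2\delta>n-1$ and $f_\delta''(\delta)<0$, and the reduction of the final inequality to $(\sin\delta)^n>n\cos\delta\int_0^\delta(\sin t)^{n-1}\,dt$ are all correct. For part (i) your argument does require the differentiability of $\mu$ in $\delta$, which you address via rescaling and perturbation of the simple first eigenvalue; you could avoid perturbation theory entirely by a direct comparison: for $\delta<\delta'\le\frac{\pi}{2}$ extend $f_\delta$ by the constant value $f_\delta(\delta)$ on $[\delta,\delta']$ and use $\mu(\delta)\sin^2 r>\mu(\delta)\sin^2\delta>n-1$ on $(\delta,\delta')$ to see that the Rayleigh quotient of this test function on $(0,\delta')$ is strictly below $\mu(\delta)$, giving $\mu(\delta')<\mu(\delta)$ at once.
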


Using $ \mu(\delta)$, we have the following estimate of $\int_\Sigma \th^2 \vsg$.

 \begin{prop}\label{prop-bdry-2} Let $\Omega = B(\delta) $ and
  $ \mu (\delta) $ be the first nonzero Neumann eigenvalue of $B(\delta) $.
 Let $g=\bg+h$ be a smooth metric on  $ B(\delta)  $
such that $g$, $\bg$ induce the same metric on $\S$ and $\div (h) =0$. Suppose $ | h | $ is  small, say
$ | h | \le \frac12 $ at every point. Let $ c = \cos \delta $ and $ s = \sin \delta$. Then
 \begin{equation*}
\begin{split}
s \int_\Sigma  \th^2 \vsg \le &  \ 2  \lf[1-   c \lf( 1 - \frac{n}{2 \mu(\delta)} \ri)
\ri]   \int_\Omega \l |\bD \th |^2 \ \vbg \\
& \ - 2\int_\Omega ( \l - c ) \th (R(g )-R(\bg))\ \vbg\\
 & \ +C ||h||_{C^1}\lf[ \int_\Omega \lf(|h|^2+|\bD h|^2\ri)   \vbg
+  \int_\Sigma |h|^2   \ \vsg\ri]\\
 & \ +C  \lf[\int_\Omega (R(g)-R(\bg))\ \vbg+2\int_\S (H(g)-H(\bg))\ \vsg\ri]^2
\end{split}
\end{equation*}
 for some positive constant $C $ depending only on $ (\Omega, \bg)$ and $c$.
 \end{prop}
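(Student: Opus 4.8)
The plan is to combine parts (i) and (ii) of Lemma \ref{lma-trace-est-1} with the eigenvalue estimate in Lemma \ref{lma-mu-estimate} to control $\int_\Sigma \th^2 \vsg$. First I would apply the divergence theorem to the vector field $\th^2 \bD \l$ over $\Omega$; since $\bD_k \bD^k \l = -n \l$ and $\omega(\bnu) = -s\,\th^2$ at $\Sigma$, one obtains an identity of the form
\be
s \int_\Sigma \th^2 \vsg = \int_\Omega \lf[ n \l \th^2 - 2 \l\, \th\, \p_{\bD\l}\th \ri]\vbg ,
\ee
which I would rewrite, after the substitution $\l = c + (\l - c)$ to isolate the boundary-radius value, so that the $\th^2$ term on the right has coefficient involving $c$ and can be absorbed using the Poincaré/eigenvalue inequality. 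Here the point is that $\int_\Omega \th\,\vbg$ is small (quadratic in $h$) by Lemma \ref{lma-trace-est-1}(ii), so the Neumann Poincaré inequality $\int_\Omega \th^2 \vbg \le \mu(\delta)^{-1} \int_\Omega |\bD\th|^2 \vbg$ applies up to the error term $F(h)$, which contributes the last (squared) term in the claimed estimate via Lemma \ref{lma-trace-est-1}(ii).

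Next I would handle the cross term $\int_\Omega \l\,\th\,\p_{\bD\l}\th\,\vbg$. The natural move is to integrate by parts again to convert $\p_{\bD\l}\th$ into $\Delta_{\bg}\th$ acting on $\th$ weighted by $\l$, or more directly to recognize $2\l\,\th\,\p_{\bD\l}\th = \p_{\bD\l}(\l\th^2) + (\l^2 - \cdots)\th^2$ type manipulations, and then invoke Lemma \ref{lma-trace-est-1}(i) with the weight $f$ chosen to be $(\l - c)$ (or a suitable multiple), which is precisely why the term $-2\int_\Omega (\l - c)\,\th\,(R(g) - R(\bg))\,\vbg$ appears in the conclusion: it is the $\int_\Omega f\,\th[R(\bg) - R(g)]\vbg$ contribution from part (i), and the associated error $E(h,f)$ with $\|f\|_{C^1}$ bounded in terms of $c$ and $\delta$ produces the $C\|h\|_{C^1}[\cdots]$ term. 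Assembling these, the coefficient $2[1 - c(1 - \tfrac{n}{2\mu(\delta)})]$ of $\int_\Omega \l |\bD\th|^2 \vbg$ should emerge: the $2$ and the $c$ come from the boundary identity and the $\l = c + (\l-c)$ split, while the $\tfrac{n}{2\mu(\delta)}$ correction is exactly what the Poincaré inequality with sharp constant $\mu(\delta)$ buys when one trades $\int_\Omega \th^2 \vbg$ for $\int_\Omega |\bD\th|^2\vbg$ in the term carrying the factor $n$.

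The main obstacle I anticipate is bookkeeping the weights correctly: one must choose the auxiliary function $f$ in Lemma \ref{lma-trace-est-1}(i) and organize the two integrations by parts so that \emph{all} interior terms involving $\th^2$ without a $|\bD\th|^2$ companion either cancel or can be moved to the left via the eigenvalue bound, leaving only the $\int_\Omega \l |\bD\th|^2$ term with the stated coefficient. A subtlety is that the Poincaré inequality controls $\int_\Omega \th^2\vbg$ but here $\th^2$ appears against the weight $\l$ (not constant), so one needs either $\l \le 1$ crudely on the terms with favorable sign, or a weighted Poincaré estimate; I expect the paper handles this by noting $\l \le 1$ where it helps and absorbing the discrepancy $\int_\Omega (1-\l)\th^2\vbg \le (1-c)\int_\Omega\th^2\vbg$ into the eigenvalue term, which is consistent with the appearance of $c$ in the final coefficient. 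The error terms are all of the routine cubic-in-$h$ / quadratic-boundary type already packaged in Lemma \ref{lma-trace-est-1}, so once the main identity is set up correctly, collecting them into the two $C[\cdots]$ terms is mechanical.
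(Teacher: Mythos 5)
Your proposal follows essentially the same route as the paper: an integration by parts against $\l$ (the paper packages your two integrations by parts into one Green's identity with the test function $\l - c$), substitution of $\th\,\Delta_{\bg}\th$ via Lemma \ref{lma-trace-est-1}(i) with $f = \l - c$ (producing exactly the $-2\int_\Omega(\l-c)\th\,(R(g)-R(\bg))\,\vbg$ term and the $C\|h\|_{C^1}[\cdots]$ error), then the Neumann--Poincar\'e inequality with $\mu(\delta)$ combined with Lemma \ref{lma-trace-est-1}(ii) for $(\int_\Omega\th\,\vbg)^2$, and finally $\l\le 1$ (together with $\l\ge c$) to produce the coefficient $2[1-c(1-\tfrac{n}{2\mu(\delta)})]$, just as you anticipate. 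The only slip is the extra factor of $\l$ in your first display: for the vector field $\th^2\bD\l$ the cross term is $2\th\,\p_{\bD\l}\th$, not $2\l\,\th\,\p_{\bD\l}\th$; this does not affect the strategy.
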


 \begin{proof}
Integrating by parts,  using the fact $ \l = c $ at $ \Sigma$ and $ \Delta_{\bg} \l = - n \l $
on $ \Omega$, we have
\be\label{eq-RH rigidity 2}
\begin{split}
\int_\Sigma   \th^2 \p_{\bnu} \l \ \vsg
 = & \int_\Omega  \th^2 \Delta_{\bg} \l   - (\l - c)  \Delta_{\bg} \th^2  \ \vbg \\
 = & \int_\Omega  - n \l \th^2    - 2 ( \l - c ) [ \th \Delta_{\bg} \th + | \bD \th |^2 ]   \vbg .
\end{split}
\ee
Choosing $ f = \l - c $ in Lemma \ref{lma-trace-est-1}(i), we have
  \begin{equation} \label{eq-fandl}
  \begin{split}
&  \int_\Omega  (\l - c)  \th \Delta_{\bg} \th \ \vbg \\
= & \ \int_\Omega - (n-1) (\l - c)   \th^2  -  ( \l -c )  \th \lf[ R(g)-R(\bg) \ri] \vbg  +  E_2 (h)
  \end{split}
  \end{equation}
  where
 $$ | E_2(h) | \le C  \lf( \int_\Omega \lf(|h|^3+|\bD h|^3\ri) \vbg
 + \int_\Sigma |h|^2 |\bD h|  \vsg \ri) $$
    for some constant $C$ depending  on $(\Omega,\bg)$ and $ c$.
It follows from \eqref{eq-RH rigidity 2} and  \eqref{eq-fandl}  that
\be\label{eq-RH rigidity 2-2}
\begin{split}
\int_\Sigma   \th^2 \p_{\bnu} \l \ \vsg
 = & \int_\Omega  \lf[ ( n - 2 )   \th^2    - 2     | \bD \th |^2 \ri] \l \  \vbg \\
 &  + 2 c  \int_\Omega  \lf[ | \bD \th |^2 - (n-1)     \th^2 \ri] \vbg  \\
 & +  2 \int_\Omega ( \l -c )  \th \lf[ R(g)-R(\bg) \ri] \vbg   -  2 E_2 (h) .
\end{split}
\ee
Since $ \l \ge c $ on $ \Omega$, \eqref{eq-RH rigidity 2-2} implies
\begin{equation*}
\begin{split}
\int_\Sigma   \th^2 \p_{\bnu} \l \ \vsg
 \ge &    - 2    \int_\Omega    | \bD \th |^2 \l   \vbg
  + 2 c  \int_\Omega  \lf[ | \bD \th |^2 - \frac n 2     \th^2 \ri] \vbg  \\
 & +  2 \int_\Omega ( \l -c )  \th \lf[ R(g)-R(\bg) \ri] \vbg   -  2 E_2 (h) .
\end{split}
\end{equation*}
By the variational characterization of $ \mu (\delta) $, we have
\be \label{eq-mu-new}
\int_\Omega | \bD  \th  |^2 \ \vbg \ge \mu(\delta)
\lf[ \lf( \int_\Omega \th^2 \ \vbg \ri) - \frac{1}{V(\bg)} \lf( \int_\Omega \th \ \vbg \ri)^2 \ri]
\ee
where $ V(\bg) = \int_\Omega 1 \vbg $.
It follows from  Lemma \ref{lma-trace-est-1}(ii) and \eqref{eq-mu-new} that
\be\label{eq-RH rigidity 3}
\begin{split}
&\int_\Omega \lf[  |\bD \th |^2 - \frac n 2   \th^2 \ri] \vbg \\
\ge & \lf( 1 - \frac{n}{2 \mu(\delta)} \ri)
\int_\Omega |\bD \th |^2\ \vbg \\
&  - C \lf[\int_\Omega (R(g)-R(\bg))\ \vbg+
2 \int_\S (H(g)-H(\bg))\ \vsg\ri]^2\\
&-  C \lf[ \int_\Omega \lf(|h|^2+|\bD h|^2\ri) \vbg
  + \int_\Sigma  ( |h|^2 + | h | |\bD h| \vsg  )\ri]^2
  \end{split}
  \ee
 for a positive constant $ C$ depending only on $(\Omega, \bg)$.
 The lemma now follows from \eqref{eq-RH rigidity 2-2}, \eqref{eq-RH rigidity 3}
 and the fact $ \l \le 1 $.
 \end{proof}

The following lemma is needed for the statement of Theorem \ref{thm-RH rigidity 1}.

\begin{lma}\label{lma-alpha}
On $(0, \frac{\pi}{2}]$, define
$$
\a(\delta)=  \lf[1-   \lf( 1 - \frac{n}{2 \mu(\delta)} \ri) \cos\delta    \ri] ^{-1}
 \frac {(  n + 1  )}{8 n}
 $$
 and
 $$
 F (\delta) = \alpha(\delta) +
\frac{ (n+3) \cos^2 \delta - 4 }{4  \sin^2 \delta }.
 $$
 Then
 \begin{enumerate}
   \item [(i)] $\a(\delta)$ is strictly decreasing, $\lim_{\delta \to 0+}\a(\delta)=\infty$ and $\a(\frac\pi2)=\frac{n+1}{8n}.$
   \item [(ii)]
 $F(\delta)$ is strictly decreasing,
$ \lim_{\delta \to 0+} F (\delta)=\infty$ and
 $F(\frac\pi2) < 0$. Hence there is exactly one
 $\delta_0\in (0,\frac\pi2)$ such that $F(\delta_0)=0$.

\item[(iii)]  $ \cos\delta_0 > \kappa $ where $ \kappa $ is the positive root of
the equation
 $$  2n(n+3)x^2+(n+1)x+(1-7n) = 0. $$
   In particular, $(\cos\delta_0)^2>\frac{1}{n+1}.$
 \end{enumerate}
\end{lma}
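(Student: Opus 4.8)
The plan is to treat the three parts in order; each reduces to an elementary monotonicity argument or to the sign of a quadratic. For part (i), write $\a(\delta)=\frac{n+1}{8n}\,[\beta(\delta)]^{-1}$ where $\beta(\delta)=1-q(\delta)\cos\delta$ and $q(\delta)=1-\frac{n}{2\mu(\delta)}$. By Lemma~\ref{lma-mu-estimate}(i) the function $\mu$ is strictly decreasing, so $q$ is strictly decreasing; by Lemma~\ref{lma-mu-estimate}(ii) we have $\mu(\delta)>n/\sin^2\delta\ge n$ on $(0,\frac\pi2]$, so $q(\delta)\in(\tfrac12,1)$. Hence on $(0,\frac\pi2)$ the product $q(\delta)\cos\delta$ is a product of positive strictly decreasing functions, and it vanishes at $\frac\pi2$; therefore $q(\delta)\cos\delta$ is strictly decreasing on $(0,\frac\pi2]$ and is $<1$ there, so $\beta$ is strictly increasing and positive and $\a$ is strictly decreasing. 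The limits follow at once: as $\delta\to0^+$, $\mu(\delta)\to\infty$ forces $q(\delta)\to1$ and $\cos\delta\to1$, so $\beta(\delta)\to0$ and $\a(\delta)\to\infty$; and $\cos\frac\pi2=0$ gives $\beta(\frac\pi2)=1$ and $\a(\frac\pi2)=\frac{n+1}{8n}$.

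For part (ii), set $G(\delta)=\frac{(n+3)\cos^2\delta-4}{4\sin^2\delta}$, so $F=\a+G$. Regarding $G$ as a function of $c=\cos\delta$, i.e.\ $G=\frac{(n+3)c^2-4}{4(1-c^2)}$, a direct computation gives $\frac{dG}{dc}=\frac{(n-1)c}{2(1-c^2)^2}>0$ for $c\in(0,1)$; since $c=\cos\delta$ strictly decreases in $\delta$, $G$ is strictly decreasing on $(0,\frac\pi2)$, and together with part (i) so is $F$ on $(0,\frac\pi2]$. Moreover $\lim_{\delta\to0^+}G(\delta)=+\infty$ (numerator $\to n-1>0$, denominator $\to0^+$), hence $\lim_{\delta\to0^+}F(\delta)=\infty$, while $F(\frac\pi2)=\a(\frac\pi2)-1=\frac{n+1}{8n}-1=\frac{1-7n}{8n}<0$. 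Strict monotonicity and the intermediate value theorem then produce a unique $\delta_0\in(0,\frac\pi2)$ with $F(\delta_0)=0$.

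For part (iii), the substantive step is an algebraic simplification of an upper bound for $F$. Since $q(\delta)<1$ we have $\beta(\delta)>1-\cos\delta$, so $\a(\delta)<\frac{n+1}{8n(1-\cos\delta)}$ on $(0,\frac\pi2)$; substituting into $F$ and clearing denominators yields, with $c=\cos\delta$ and $p(x)=2n(n+3)x^2+(n+1)x+(1-7n)$,
\[
F(\delta)<\frac{n+1}{8n(1-c)}+\frac{(n+3)c^2-4}{4(1-c^2)}=\frac{p(c)}{8n(1-c^2)}.
\]
Since $p(0)=1-7n<0$ and $p(1)=2n^2+2>0$, $p$ has a unique positive root $\kappa\in(0,1)$, and $p<0$ on $(0,\kappa)$. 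Let $\delta^*\in(0,\frac\pi2)$ be defined by $\cos\delta^*=\kappa$; then $F(\delta^*)<\frac{p(\kappa)}{8n(1-\kappa^2)}=0=F(\delta_0)$, and since $F$ is strictly decreasing this forces $\delta_0<\delta^*$, i.e.\ $\cos\delta_0>\kappa$. Finally, $(\cos\delta_0)^2>\frac1{n+1}$ follows once $\kappa>\frac1{\sqrt{n+1}}$, which holds iff $p\big(\tfrac1{\sqrt{n+1}}\big)<0$; a short computation gives $p\big(\tfrac1{\sqrt{n+1}}\big)=-5(n-1)+\sqrt{n+1}-\tfrac4{n+1}$, which is negative for all $n\ge2$, so $\kappa>\tfrac1{\sqrt{n+1}}$ and hence $(\cos\delta_0)^2>\kappa^2>\tfrac1{n+1}$.

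I expect parts (i) and (ii) to be routine; the only care needed is to keep the monotonicity directions consistent and to note that at the endpoint $\delta=\frac\pi2$ the derivative of $G$ vanishes, so strictness there must be deduced from strict negativity of the derivative on the open interval. The real content is the factorization in part (iii): the upper bound for $F$ has to collapse to exactly the quadratic $p$ appearing in the statement, and one must apply the monotonicity of $F$ in the correct direction so as to turn ``$F(\delta^*)<0$'' into ``$\cos\delta_0>\kappa$'' rather than the reverse inequality.
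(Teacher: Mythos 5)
Your proof is correct and follows essentially the same route as the paper: monotonicity of $\a$ from Lemma \ref{lma-mu-estimate}, monotonicity of the second term of $F$ (the paper rewrites it as $\frac{n-1}{4\sin^2\delta}-\frac{n+3}{4}$ where you differentiate in $c$), and for (iii) the same bound $\a(\delta)<\frac{n+1}{8n(1-\cos\delta)}$ leading to the same quadratic $p$, with only the cosmetic difference that you evaluate at $\delta^*$ with $\cos\delta^*=\kappa$ and invoke monotonicity of $F$, while the paper plugs in $\delta_0$ directly to get $p(\cos\delta_0)>0$. You also make explicit the check $p\bigl(\tfrac{1}{\sqrt{n+1}}\bigr)<0$, which the paper leaves implicit.
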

\begin{proof} (i) follows  directly from  Lemma \ref{lma-mu-estimate}.
(ii) follows from (i) and the fact 
$$
F(\delta) = \alpha(\delta) +  \frac{n-1}{4} \frac{1}{\sin^2 \delta} - \frac{n+3}{4} .
$$
To prove (iii),  suppose  $\cos\delta_0=a$. Since $0<1-\frac{n}{2 \mu(\delta_0)}<1$,  one has $
\lf(1-\frac{n}{2 \mu(\delta_0)}\ri)\cos\delta_0<a
$
and
$
\a(\delta_0)<\frac{n+1}{8n}\frac{1}{(1-a)}.
$
Therefore,
$$ 0  = F (\delta_0)  < \frac{n+1}{8n}\frac{1}{(1-a)} +
 \frac{n-1}{4} \frac{1}{1 - a^2} - \frac{n+3}{4} $$
which  implies (iii).
\end{proof}

\begin{thm}\label{thm-RH rigidity 1}
 Let $\Omega = B (\delta)$   be a geodesic ball of radius $\delta$ in $  \mathbb{S}^n$.
 Suppose $ \delta<\delta_0 ,$
where $ \delta_0 $ is the unique zero in $(0, \frac{\pi}{2})$ of the function
$$
F (\delta) = \alpha (\delta)  +
\frac{ (n+3) \cos^2 \delta - 4 }{4  \sin^2 \delta }
$$
where $ \alpha (\delta) = \lf[1-   \lf( 1 - \frac{n}{2 \mu(\delta)} \ri) \cos\delta    \ri] ^{-1}
 \frac {(  n + 1  )}{8 n}  $.
 Then the conclusion of Theorem \ref{thm-BM} holds on $\Omega $.
 \end{thm}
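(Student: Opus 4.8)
The plan is to combine Theorem~\ref{thm-BM-est} with the boundary estimate of Proposition~\ref{prop-bdry-2}, the new point being that the hypothesis $F(\delta)>0$ (equivalent to $\delta<\delta_0$, since $F$ is strictly decreasing by Lemma~\ref{lma-alpha}(ii)) is exactly what renders the resulting interior quadratic form coercive. As in the derivation of Theorem~\ref{thm-BM} from Theorem~\ref{thm-BM-est}, one first pulls $g$ back by a diffeomorphism fixing $\Sigma$ so as to reduce to the case $\div(h)=0$ with $\|h\|_{W^{2,p}(\Omega)}$, hence $\|h\|_{C^1(\ol\Omega)}$, small, where $h=g-\bg$. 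Set $c=\cos\delta$, $s=\sin\delta$, $Q(h)=\int_\Omega(|h|^2+|\bD h|^2)\,\vbg$. Since $\delta<\delta_0$ and $\cos$ is decreasing, $c=\cos\delta>\cos\delta_0$, so $c^2>(\cos\delta_0)^2>\tfrac1{n+1}$ by Lemma~\ref{lma-alpha}(iii); hence the coefficient $\tfrac n{2(n-1)}H(\bg)\l+\tfrac12\p_{\bnu}\l=\tfrac{(n+1)c^2-1}{2s}$ of $|X|^2$ in $W(h)$ is nonnegative, and we may assume $c^2<\tfrac4{n+3}$ (otherwise Theorem~\ref{thm-BM} applies). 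From Theorem~\ref{thm-BM-est} and the hypotheses $R(g)\ge n(n-1)$, $H(g)\ge H(\bg)$, $|h|\le\tfrac12$, the quantity
\[
L:=\int_\Omega[R(g)-n(n-1)]\l\,\vbg+\int_\Sigma(2-h(\bnu,\bnu))[H(g)-H(\bg)]\l\,\vsg
\]
is nonnegative and $W(h)+L=\mathcal R(h)$ with $|\mathcal R(h)|\le C\|h\|_{C^1}Q(h)$; moreover $|W(h)|\le CQ(h)$ by the trace inequality, so $0\le L\le CQ(h)$ as well.

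Next one bounds $W(h)$ from below. As $g$ and $\bg$ agree on $\Sigma$ we have $h(\bnu,\bnu)=\th$ there, so $\int_\Sigma h(\bnu,\bnu)^2\vsg=\int_\Sigma\th^2\vsg$; dropping the nonnegative $|X|^2$-term of $W(h)$ (see \eqref{eq-defofwh}) and substituting the estimate of Proposition~\ref{prop-bdry-2} into the remaining boundary integral (whose coefficient $\tfrac14H(\bg)\l+\p_{\bnu}\l=\tfrac{(n+3)c^2-4}{4s}$ is negative, so one writes it as $\tfrac{(n+3)c^2-4}{4s^2}\,s\int_\Sigma\th^2\vsg$ and bounds $s\int_\Sigma\th^2\vsg$ from above) gives
\[
W(h)\ \ge\ \tfrac14\int_\Omega|\bD h|^2\l\,\vbg+a\int_\Omega|\bD\th|^2\l\,\vbg+\tfrac12\int_\Omega(|h|^2+\th^2)\l\,\vbg-C\|h\|_{C^1}\bigl(L+Q(h)\bigr),
\]
where $a=\tfrac14-\tfrac{4-(n+3)c^2}{2s^2}\bigl(1-c(1-\tfrac n{2\mu(\delta)})\bigr)$. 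Here the term $\tfrac{4-(n+3)c^2}{2s^2}\int_\Omega(\l-c)\th\,(R(g)-R(\bg))\,\vbg$ produced by Proposition~\ref{prop-bdry-2} has been absorbed into $C\|h\|_{C^1}L$, using $R(g)\ge R(\bg)$, $0\le\l-c\le\l$ and $\int_\Omega\l(R(g)-R(\bg))\,\vbg\le L$; the other error terms contain $\bigl[\int_\Omega(R(g)-R(\bg))\,\vbg+2\int_\Sigma(H(g)-H(\bg))\,\vsg\bigr]^2\le(\tfrac2cL)^2$, which is $\le C\|h\|_{C^1}^2Q(h)$ because $L\le CQ(h)$ and $Q(h)\le C\|h\|_{C^1}^2$.

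The crux is the inequality $a>-\tfrac1{4n}$. Indeed $F(\delta)>0$ reads $\alpha(\delta)>\tfrac{4-(n+3)c^2}{4s^2}$; multiplying by $\tfrac{n+1}{4n\alpha(\delta)}>0$ and inserting $\alpha(\delta)=\tfrac{n+1}{8n}\bigl(1-c(1-\tfrac n{2\mu(\delta)})\bigr)^{-1}$ turns this into $\tfrac{4-(n+3)c^2}{2s^2}\bigl(1-c(1-\tfrac n{2\mu(\delta)})\bigr)<\tfrac{n+1}{4n}$, i.e.\ $a>\tfrac14-\tfrac{n+1}{4n}=-\tfrac1{4n}$. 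Together with the pointwise bound $|\bD\th|^2\le n|\bD h|^2$ and $\l\ge c>0$, this yields
\[
\tfrac14\int_\Omega|\bD h|^2\l\,\vbg+a\int_\Omega|\bD\th|^2\l\,\vbg+\tfrac12\int_\Omega(|h|^2+\th^2)\l\,\vbg\ \ge\ c\bigl(\tfrac14+\min(a,0)\,n\bigr)Q(h)=:\epsilon_0Q(h),
\]
with $\epsilon_0>0$ (keep all of $\tfrac14|\bD h|^2\l$ if $a\ge0$; otherwise estimate $a\int_\Omega|\bD\th|^2\l\ge an\int_\Omega|\bD h|^2\l$ and note $\tfrac14+an>0$).

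Combining the last two displays, $W(h)\ge\epsilon_0Q(h)-C\|h\|_{C^1}\bigl(L+Q(h)\bigr)$; adding $L$ and using $W(h)+L=\mathcal R(h)$ with $L\ge0$ gives, for $\|h\|_{C^1}$ small,
\[
C\|h\|_{C^1}Q(h)\ \ge\ \mathcal R(h)\ \ge\ \epsilon_0Q(h)+(1-C\|h\|_{C^1})L-C\|h\|_{C^1}Q(h)\ \ge\ (\epsilon_0-C\|h\|_{C^1})Q(h),
\]
so $Q(h)=0$ once $\|h\|_{C^1}<\epsilon_0/(2C)$; hence $h\equiv0$ and $\varphi^*(g)=\bg$ with $\varphi|_{\p\Omega}=\mathrm{id}$. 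The main obstacle is the sign bookkeeping in the two middle paragraphs: the $R(g)-R(\bg)$ term produced by Proposition~\ref{prop-bdry-2} must be controlled using only the nonnegativity of $R(g)-R(\bg)$ and of $L$, not a bound that would circularly involve $W(h)$ — which is why, unlike in Brendle and Marques' argument, the nonnegative quantity $L$ is retained throughout rather than discarded; a secondary, purely algebraic point is to check that the coercivity threshold $a>-\tfrac1{4n}$ is exactly $F(\delta)>0$ after unwinding the definition of $\alpha(\delta)$.
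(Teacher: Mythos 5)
Your proposal is correct and follows essentially the same route as the paper: gauge to $\div(h)=0$, drop the nonnegative $|X|^2$ term using $c^2>\tfrac1{n+1}$ from Lemma \ref{lma-alpha}(iii), substitute Proposition \ref{prop-bdry-2} into the negative boundary term of $W(h)$, and observe that $F(\delta)>0$ is exactly the coercivity condition — your inequality $a>-\tfrac1{4n}$ is the same threshold the paper encodes in \eqref{eq-smallep} via the auxiliary $\ep\in(0,1)$, and your retention of the nonnegative quantity $L$ to absorb the $(\l-c)\th(R(g)-R(\bg))$ and squared deficit terms is the same bookkeeping as the paper's \eqref{eq-RH rigidity 5}. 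Your final absorption step is, if anything, spelled out slightly more carefully, but it is not a different argument.
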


\begin{proof}
 Let $ W(h) $ be given in \eqref{eq-defofwh}. Let $ c  = \cos \delta$. 
Lemma \ref{lma-alpha}(iii) shows
$ c^2 > \frac{1}{ n+1 }$. Hence, the coefficient
of $ | X|^2 $ in $W(h)$  is nonnegative.
By Theorem \ref{thm-BM}, it suffices to assume  
 $ c^2  < \frac{4}{ n+3 }$.
 Apply
 Proposition \ref{prop-bdry-2}, we  have
\begin{equation}
\label{eq-BM-est-method2-2}
\begin{split}
W(h) \ge & \ \int_\Omega \lf[   \frac14 ( | \bD h |^2 + | \bnabla \th  |^2 ) + \frac12 \lf(|h|^2+\th^2\ri)  \ri] \l
   \ \vbg \\
& \ + \lf[\frac{ (n+3)c^2 - 4 }{4(1-c^2)}\ri] 2  \lf[1-   c \lf( 1 - \frac{n}{2 \mu(\delta)} \ri) \ri]   \int_\Omega  |\bD \th |^2 \l \ \vbg  \\
& + \hat{E}(h,c),
\end{split}
\end{equation}
where
\begin{equation} \label{eq-error2}
\begin{split}
\hat{E}(h,c) = & \lf[\frac{ (n+3)c^2 - 4 }{4(1-c^2)}\ri]
\lf\{  - 2\int_\Omega ( \l - c ) \th (R(g )-R(\bg)) \vbg   \ri. \\
 & +C ||h||_{C^1}\lf[ \int_\Omega \lf(|h|^2+|\bD h|^2\ri)   \vbg
+  \int_\Sigma |h|^2   \ \vsg\ri]\\
 & \lf. + C  \lf[\int_\Omega (R(g)-R(\bg))\ \vbg+2\int_\S (H(g)-H(\bg))\ \vsg\ri]^2 \ri\}.
\end{split}
\end{equation}
Since $ \delta < \delta_0 $,  Lemma \ref{lma-alpha} (ii) implies
$$
F(\delta) =\alpha(\delta) +
\frac{ (n+3) \cos^2 \delta - 4 }{4(1- \cos^2 \delta )} > F(\delta_0) = 0 .
$$
Hence there exists a small constant  $ \ep \in (0,1) $ such that
\be \label{eq-smallep}
\frac{1}{4} \lf( 1 + \frac {(1-  \ep)}{ n} \ri) + \lf[\frac{ (n+3)c^2 - 4 }{4(1-c^2)}\ri] 2  \lf[1-   c \lf( 1 - \frac{n}{2 \mu(\delta)} \ri) \ri]   > 0 .
\ee
By \eqref{eq-BM-est-method2-2} and \eqref{eq-smallep}, using the fact
 $ | \bD h |^2 \ge \frac1n | \bD \th |^2 $, we  have
\begin{equation}
\label{eq-BM-est-method2-3}
\begin{split}
W(h) \ge & \  \frac{1}{4}\ep  c \int_\Omega ( | \bD h |^2 + |h|^2  )   \ \vbg + \hat{E}(h,c).
\end{split}
\end{equation}

Now suppose
$R(g)-R(\bg)\ge0$,  $H(g)-H(\bg)\ge0$
 and $ || h ||_{W^{2,p}(\Omega)}$ is sufficiently small.
 It follows from Theorem \ref{thm-BM-est}, \eqref{eq-error2} and  \eqref{eq-BM-est-method2-3} that
\begin{equation}\label{eq-RH rigidity 5}
 \begin{split}
  & \frac12  \int_\Omega [R(g)-R(\bg)] \l \ \vbg   + \frac12 \int_\S [H(g)-H(\bg)]\l\ \vsg \\
    \le    & \ \ep \int_\Omega ( | \bD h |^2 + |h|^2  )   \ \vbg  \\
 &+C||h||_{C^1}\lf[ \int_\Omega \lf(|h|^2+|\bD h|^2\ri)   \vbg +  \int_\Sigma |h|^2   \ \vsg\ri].
 \end{split}
 \end{equation}
for some positive constant $ C$ independent of $h$.
We can then proceed as in \cite{BrendleMarques}:
since
$ ||h||_{L^2 (\Sigma)} \le C||h||_{W^{1,2}(\Omega) } $,
one knows  the  terms in the last line in \eqref{eq-RH rigidity 5}
is bounded by $ C|| h ||_{C^1(\overline{\Omega})} || h ||_{W^{1,2}(\Omega)} $.
Therefore,  if $ || h ||_{W^{2,p}({\Omega})}$ is sufficiently small,
\eqref{eq-RH rigidity 5} implies  $ h $ must  vanish
identically.  This completes the proof of Theorem \ref{thm-RH rigidity 1}.
\end{proof}

We give some lower estimates of $\delta_0$ which are relatively more explicit.

\begin{prop}\label{prop-cosd-estimate}
$ \delta_0 $ in Theorem \ref{thm-RH rigidity 1} satisfies
\begin{enumerate}

   \item [(i)]  $\delta_0>\tilde\delta_0$ where $\tilde\delta_0$ is the unique zero in $(0,\frac \pi 2)$ of the equation
       $$
\lf[1-\lf(1-\frac{n}{2 \tilde \mu (\delta)}\ri)
       \cos\delta\ri]^{-1}\frac{n+1}{8n} +
\frac{ (n+3) \cos^2 \delta - 4 }{4(1- \cos^2 \delta )} = 0
 $$
    where
     $  \displaystyle  \tilde \mu (\delta)=n+\frac{(\sin\delta)^{n-2}\cos\delta}{\int_0^\delta (\sin t)^{n-1}dt}.
       $
       
       \vh
       
       \item[(ii)] $ \cos \delta_0 <  \tilde \kappa $ where $\tilde \kappa $ is the unique zero in $(0,1)$ of the equation
           $$ n (n+3) x^4 +  n (n+3) x^3 +   2 n ( n+1) x^2  + (1 - 3n) x  - 7n + 1  = 0 . $$

\vh

       \item[(iii)]  $ \displaystyle  (\cos\delta_0)^2 <\frac{7n-1}{2n^2+5n-1} . $
 \end{enumerate}
\end{prop}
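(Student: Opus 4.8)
The plan is to treat part (i) on its own, and to derive (ii) and (iii) together from a single quartic inequality satisfied by $c:=\cos\delta_0$.

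\emph{Part (i).} Put $\tilde\alpha(\delta)=\bigl[1-(1-\tfrac{n}{2\tilde\mu(\delta)})\cos\delta\bigr]^{-1}\tfrac{n+1}{8n}$ and $\tilde F=\tilde\alpha+\tfrac{(n+3)\cos^2\delta-4}{4\sin^2\delta}$, so $\tilde\delta_0$ is, by definition, a zero of $\tilde F$. By Lemma \ref{lma-mu-estimate}(ii) one has $\mu(\delta)>\tilde\mu(\delta)>n$ on $(0,\tfrac{\pi}{2})$, so $1-\tfrac{n}{2\mu(\delta)}$ and $1-\tfrac{n}{2\tilde\mu(\delta)}$ both lie in $(0,1)$ with the former larger; multiplying by $\cos\delta\in(0,1)$ and subtracting from $1$ reverses the order and keeps positivity, giving $0<1-(1-\tfrac{n}{2\mu(\delta)})\cos\delta<1-(1-\tfrac{n}{2\tilde\mu(\delta)})\cos\delta$. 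Hence $\alpha(\delta)>\tilde\alpha(\delta)$, so $F(\delta)>\tilde F(\delta)$ for every $\delta\in(0,\tfrac{\pi}{2})$. To justify ``the unique zero $\tilde\delta_0$'' I note, as in the proof of Lemma \ref{lma-alpha}(ii), that $\tilde F(\delta)=\tilde\alpha(\delta)+\tfrac{n-1}{4\sin^2\delta}-\tfrac{n+3}{4}$ satisfies $\tilde F(0^+)=+\infty$, $\tilde F(\tfrac{\pi}{2})=\tfrac{n+1}{8n}-1<0$, and that $\tilde F$ is strictly decreasing; the last point reduces to $\tilde\mu$ being strictly decreasing, which after differentiating $1/(\tilde\mu-n)$ comes down to the estimate $\int_0^\delta(\sin t)^{n-1}\,dt<(\sin\delta)^n/(n\cos\delta)$ that is already contained in Lemma \ref{lma-mu-estimate}(ii). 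Granting this, $F(\tilde\delta_0)>\tilde F(\tilde\delta_0)=0=F(\delta_0)$, and since $F$ is strictly decreasing (Lemma \ref{lma-alpha}(ii)) we get $\tilde\delta_0<\delta_0$.

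\emph{The quartic inequality.} Set $c=\cos\delta_0\in(0,1)$. Since $\alpha>0$, the equation $F(\delta_0)=0$ forces $(n+3)c^2<4$ together with
\[\alpha(\delta_0)=\frac{4-(n+3)c^2}{4(1-c^2)}.\]
On the other hand $\mu(\delta_0)>n/\sin^2\delta_0$ (Lemma \ref{lma-mu-estimate}(ii)) gives $1-\tfrac{n}{2\mu(\delta_0)}>1-\tfrac{\sin^2\delta_0}{2}=\tfrac{1+c^2}{2}$, whence
\[\alpha(\delta_0)>\Bigl[1-\tfrac{(1+c^2)c}{2}\Bigr]^{-1}\frac{n+1}{8n}=\frac{n+1}{4n(2-c-c^3)}.\]
Using the factorizations $2-c-c^3=(1-c)(2+c+c^2)$ and $1-c^2=(1-c)(1+c)$, these two facts combine — after cancelling the positive factor $1-c$ — to $n(2+c+c^2)\bigl(4-(n+3)c^2\bigr)>(n+1)(1+c)$, which expands to
\[P(c):=n(n+3)c^4+n(n+3)c^3+2n(n+1)c^2+(1-3n)c+(1-7n)<0.\]

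\emph{Parts (ii), (iii), and the main obstacle.} For (ii), one checks $P(0)=1-7n<0$, $P(1)=2(2n^2-n+1)>0$, $P'(0)=1-3n<0$, $P'(1)=11n^2+22n+1>0$, and $P''>0$ on $[0,\infty)$; hence $P'$ vanishes exactly once in $(0,1)$, $P$ is first decreasing then increasing there, and so $P$ has a unique zero $\tilde\kappa\in(0,1)$ with $P<0$ on $(0,\tilde\kappa)$ and $P>0$ on $(\tilde\kappa,1)$. As $P(c)<0$ with $c\in(0,1)$, this gives $c<\tilde\kappa$. For (iii), write $(2n^2+5n-1)c^2-(7n-1)=2n(n+1)c^2+(3n-1)c^2-(7n-1)$ and substitute the upper bound for $2n(n+1)c^2$ furnished by $P(c)<0$; after cancellation this becomes
\[(2n^2+5n-1)c^2-(7n-1)<c(1+c)\bigl[(3n-1)-n(n+3)c^2\bigr].\]
If $c^2\ge\tfrac{3n-1}{n(n+3)}$ the right side is $\le0$; if instead $c^2<\tfrac{3n-1}{n(n+3)}$, then since a one-line estimate gives $\tfrac{3n-1}{n(n+3)}<\tfrac{7n-1}{2n^2+5n-1}$ for all $n\ge1$, the bound holds directly. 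Either way $(\cos\delta_0)^2<\tfrac{7n-1}{2n^2+5n-1}$. I expect the only points requiring real care to be the bookkeeping that produces $P(c)<0$ — especially spotting $2-c-c^3=(1-c)(2+c+c^2)$ so that $1-c$ cancels and the estimate becomes an honest polynomial inequality — and the monotonicity of $\tilde\mu$ needed for the uniqueness of $\tilde\delta_0$ in (i); the remaining polynomial analysis is elementary.
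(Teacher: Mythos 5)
Your argument is correct, and for parts (i) and (ii) it is essentially the paper's own: for (i) the paper also uses $\mu(\delta)>\tilde\mu(\delta)$ from Lemma \ref{lma-mu-estimate}(ii) plus strict monotonicity of the comparison function (the paper merely asserts that $\tilde\mu$ is strictly decreasing; your reduction of that assertion to the estimate $\int_0^\delta(\sin t)^{n-1}dt<(\sin\delta)^n/(n\cos\delta)$, which is equivalent to $\tilde\mu>n/\sin^2\delta$, does work and is if anything more careful), and for (ii) the paper likewise replaces the bound by the weaker $\mu(\delta_0)>n/\sin^2\delta_0$, which after cancelling the factor $1-c$ produces exactly your quartic inequality $P(c)<0$; your sign analysis of $P$ on $(0,1)$ correctly converts this into $c<\tilde\kappa$ and also supplies the uniqueness of $\tilde\kappa$ claimed in the statement. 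The one place you diverge is (iii): the paper gets it in one line from the crude bound $\alpha(\delta_0)>\frac{n+1}{8n}$ (valid because the bracket in the definition of $\alpha$ lies in $(0,1)$), which gives $\frac{n+1}{8n}+\frac{(n+3)c^2-4}{4(1-c^2)}<0$ and hence $(2n^2+5n-1)c^2<7n-1$ directly, whereas you deduce (iii) from the stronger quartic via a case split on whether $c^2\ge\frac{3n-1}{n(n+3)}$. I checked your rearrangement $(2n^2+5n-1)c^2-(7n-1)<c(1+c)\bigl[(3n-1)-n(n+3)c^2\bigr]$ and the comparison $\frac{3n-1}{n(n+3)}<\frac{7n-1}{2n^2+5n-1}$, so your route is sound, just longer than necessary: (iii) does not actually need (ii) or the Neumann eigenvalue bound at all.
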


\begin{proof}
By Lemma \ref{lma-mu-estimate} (ii),   $\mu(\delta_0)>\tilde \mu (\delta_0)$. Hence,
\be \label{eq-compare-1}
\lf[1-\lf(1-\frac{n}{2 \tilde \mu (\delta_0)}\ri)
       \cos\delta_0 \ri]^{-1}\frac{n+1}{8n} +
\frac{ (n+3) \cos^2 \delta_0 - 4 }{4(1- \cos^2 \delta_0 )} < 0.
\ee
Note that $ \tilde \mu (\delta)$ is  strictly decreasing
in $(0, \frac{\pi}{2} ]$.
 As in the proof of Lemma \ref{lma-alpha}(ii),  we know the function
$$
\lf[1-\lf(1-\frac{n}{2 \tilde \mu (\delta)}\ri)
       \cos\delta\ri]^{-1}\frac{n+1}{8n} +
\frac{ (n+3) \cos^2 \delta - 4 }{4(1- \cos^2 \delta )}
$$
is strictly decreasing and has a unique zero $\tilde \delta_0$ in $(0,\frac\pi2)$. Hence, (i)
follows from \eqref{eq-compare-1}.

The proof of  (ii) is similar to that of (i) except we replace the lower bound $ \mu (\delta) > \tilde \mu (\delta) $
by a weaker lower bound  $\mu(\delta_0)>\frac{n}{(\sin\delta_0)^2}=\frac{n}{1-(\cos\delta_0)^2}$.

(iii)  follows from the fact
$$ \frac{n+1}{8n} +
\frac{ (n+3) \cos^2 \delta - 4 }{4(1- \cos^2 \delta )}  < 0. $$

\end{proof}

Theorem \ref{thm-RH rigidity 1} and Proposition \ref{prop-cosd-estimate} (iii) verify condition ({\bf b}) in the introduction.

\subsection{A Combined approach} \label{sec-combined}
It remains to   confirm the  case $  n = 3, 4 $ in condition ({\bf a}). 
To do so, we combine the  two  methods  leading to Theorem \ref{thm-method-1} 
and Theorem \ref{thm-RH rigidity 1}.

\begin{thm} \label{thm-combined}
Suppose $ 3 \le n \le 4$, Theorem \ref{thm-BM} is true on $ B(\delta) $ if
\be \label{eq-n345}
 \cos\delta >  \left( \frac{ 4 ( n+4) - 4 \sqrt{2n -1} }{ n^2 + 6 n + 17 } \right)^\frac12
 \approx
 \left\{
 \begin{array}{cl}
 0.6581, & \ n = 3 \\
 0.6130, & \ n = 4.
\end{array}
 \ri.
 \ee
\end{thm}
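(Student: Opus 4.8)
The plan is to run the Brendle--Marques argument exactly as in the proofs of Theorems \ref{thm-method-1} and \ref{thm-RH rigidity 1}, but to estimate the boundary integral in $W(h)$ by a hybrid of the two earlier methods: use Proposition \ref{prop-bdry-1} to absorb the $\int_\Omega|h|^2$ and $\int_\Omega|\bD\th|^2$ contributions of $\int_\Sigma\th^2$ (this is ``method 1''), and use the Neumann--eigenvalue inequality \eqref{eq-mu-new} together with Lemma \ref{lma-trace-est-1}(ii) (this is ``method 2'') to dispose of the leftover $\int_\Omega\th^2$ term, which is precisely the piece that forced the extra hypothesis $\cos^2\delta\ge\frac2{n+1}$ in Theorem \ref{thm-method-1}. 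Write $c=\cos\delta$, $s=\sin\delta$, $\beta=\frac{4-(n+3)c^2}{4s}$. By Theorem \ref{thm-BM} I may assume $c^2<\frac4{n+3}$, so $\beta>0$; and \eqref{eq-n345} gives $c^2>\zeta^2>\frac1{n+1}$ for $n=3,4$, so the $|X|^2$--coefficient in \eqref{eq-defofwh} is nonnegative and, since $h(\bnu,\bnu)=\th$ on $\Sigma$,
\[
W(h)\ \ge\ \int_\Omega\Big[\tfrac14\big(|\bD h|^2+|\bD\th|^2\big)+\tfrac12\big(|h|^2+\th^2\big)\Big]\l\,\vbg\ -\ \beta\int_\Sigma\th^2\,\vsg .
\]

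The decisive step is to bound $\beta\int_\Sigma\th^2=\frac\beta s\,(s\int_\Sigma\th^2)$ by Proposition \ref{prop-bdry-1} applied with the \emph{specific} weight $w=\frac{2\beta}{c}$. Using $\l\ge c$ and $|\bD\l|^2=1-\l^2\le s^2$, the $|h|^2$-- and $|\bD\th|^2$--contributions combine with the interior terms of $W(h)$ to have coefficients bounded below by $\frac{c^2-2\beta^2}{2c}$ and by $0$, respectively, so that
\[
W(h)\ \ge\ \int_\Omega\Big[\tfrac c4|\bD h|^2+\tfrac{c^2-2\beta^2}{2c}\,|h|^2+\big(\tfrac12-\tfrac\beta s\big)\l\,\th^2\Big]\vbg .
\]
The reason for this choice of $w$ is that $c^2>2\beta^2$ is equivalent --- squaring being legitimate since $c^2<\frac4{n+3}$ --- to $(n^2+6n+17)c^4-8(n+4)c^2+16<0$, i.e.\ to $c^2>\zeta^2$ (one checks the larger root of this quadratic exceeds $\frac4{n+3}$), so the $|h|^2$--coefficient is a fixed positive number. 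If $c^2\ge\frac2{n+1}$ then $\frac12-\frac\beta s\ge0$ and the argument is finished. If instead $\zeta^2<c^2<\frac2{n+1}$ then $\big(\tfrac12-\tfrac\beta s\big)\l\,\th^2\ge-\tfrac{2-(n+1)c^2}{4(1-c^2)}\th^2$ (as $\l\le1$), and combining \eqref{eq-mu-new} with the bound $\mu(\delta)>\frac n{1-c^2}$ of Lemma \ref{lma-mu-estimate}(ii) and with Lemma \ref{lma-trace-est-1}(ii) gives
\[
-\tfrac{2-(n+1)c^2}{4(1-c^2)}\int_\Omega\th^2\,\vbg\ \ge\ -\tfrac{2-(n+1)c^2}{4n}\int_\Omega|\bD\th|^2\,\vbg\ +\ \hat E(h,c),
\]
where $\hat E(h,c)$ collects error terms and terms in $R(g)-R(\bg)$, $H(g)-H(\bg)$ of the same nature as in \eqref{eq-error2}. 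Since $|\bD\th|^2\le n|\bD h|^2$, this negative $|\bD\th|^2$--term is absorbed into $\tfrac c4|\bD h|^2$, leaving the coefficient $\tfrac{(n+1)c^2+c-2}{4}$ in front of $|\bD h|^2$; this is positive because for $n=3,4$ the constant $\big(\frac{4(n+4)-4\sqrt{2n-1}}{n^2+6n+17}\big)^{1/2}$ of \eqref{eq-n345} exceeds the positive root of $(n+1)x^2+x-2=0$. Either way $W(h)\ge\e\int_\Omega(|\bD h|^2+|h|^2)\,\vbg+\hat E(h,c)$ for a fixed $\e>0$; I then feed this into Theorem \ref{thm-BM-est} and repeat verbatim the concluding argument of Theorem \ref{thm-RH rigidity 1} (using that $\|h\|_{W^{2,p}(\Omega)}$ is small) to force $h\equiv0$.

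The main obstacle is the coefficient bookkeeping in the middle step --- tracking the contributions of $\int_\Omega|h|^2$, $\int_\Omega|\bD\th|^2$, $\int_\Omega\th^2$ and $\int_\Omega|\bD h|^2$ through the substitution $w=\frac{2\beta}{c}$ and through the Poincar\'e/eigenvalue estimate --- together with the verification that the two inequalities that emerge, $c^2>2\beta^2$ (needed for the $|h|^2$--coefficient) and $(n+1)c^2+c>2$ (needed after absorbing the leftover $\int_\Omega\th^2$), are both consequences of \eqref{eq-n345} when $n=3$ and $n=4$. The clean point that makes everything fit is the algebraic identity that $c^2>2\beta^2$ is \emph{exactly} the hypothesis $c^2>\zeta^2$ of \eqref{eq-n345}; the second inequality is then automatic, since in these two dimensions the positive root of $(n+1)x^2+x-2=0$ is smaller than $\zeta$.
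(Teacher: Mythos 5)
Your proposal is correct, and it realizes the same overall strategy as the paper's proof of Theorem \ref{thm-combined}: both combine Proposition \ref{prop-bdry-1} (method 1) with the Neumann--eigenvalue Poincar\'e inequality \eqref{eq-mu-new}, Lemma \ref{lma-mu-estimate}(ii) and Lemma \ref{lma-trace-est-1}(ii) (method 2), and both finish by feeding the coercivity estimate into Theorem \ref{thm-BM-est} exactly as at the end of Theorem \ref{thm-RH rigidity 1}. The execution, however, differs in a meaningful way. The paper starts from \eqref{eq-BM-est-method1-2-2} (i.e.\ the fixed weight $w=\sqrt2$), keeps positive surpluses on both $|h|^2$ and $|\bD\th|^2$, splits off fractions $\theta,\tau$ of the $|\bD h|^2$ and $|h|^2$ terms, applies the Poincar\'e inequality to the whole $|\bD\th|^2$ budget, and then verifies the resulting coefficient inequality \eqref{eq-Mathematica} numerically (via \emph{Mathematica}), for $c>0.6378$ ($n=3$) and $c>0.5933$ ($n=4$). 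You instead tune the weight to $w=2\beta/c$, which makes the $|\bD\th|^2$ coefficient exactly zero and turns the positivity of the $|h|^2$ coefficient, $c^2>2\beta^2$, into precisely the hypothesis \eqref{eq-lmaii-1}, i.e.\ $c>\zeta$; the only remaining deficit is the $\int_\Omega\th^2$ term, which you convert via \eqref{eq-mu-new} and $\mu(\delta)>n/(1-c^2)$ into a $|\bD\th|^2$ deficit of size $\frac{2-(n+1)c^2}{4n}$ and absorb into $\frac c4|\bD h|^2$ using $|\bD\th|^2\le n|\bD h|^2$, leaving the coefficient $\frac{(n+1)c^2+c-2}{4}$. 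The payoff is that your final condition is the elementary inequality $(n+1)\zeta^2+\zeta-2>0$ (true for $n=3,4$, since the positive root of $(n+1)x^2+x-2$ is about $0.593$, resp.\ $0.540$), so the verification is algebraic and by hand, whereas the paper's check is numerical; your error term is also slightly simpler than \eqref{eq-error2} (no $\int_\Omega(\l-c)\th\,(R(g)-R(\bg))$ term), and the concluding absorption argument goes through verbatim. I checked the bookkeeping: the $|X|^2$ coefficient is indeed nonnegative since $\zeta^2>\frac1{n+1}$ for $n=3,4$; the larger root of $(n^2+6n+17)x^2-8(n+4)x+16$ does exceed $\frac4{n+3}$, so $c^2>2\beta^2$ is equivalent to $c^2>\zeta^2$ in the relevant range; and both cases of your split ($c^2\ge\frac2{n+1}$ and $\zeta^2<c^2<\frac2{n+1}$) close correctly. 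So your argument is sound and, if anything, removes the computer-assisted step.
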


\begin{proof}
Let $ c = \cos \delta$.  \eqref{eq-n345} implies $ c^2 > \frac{1}{n+1}$. By
\eqref{eq-BM-est-method1-2-2}, we have
$ W(h) \ge Y(h) $
where
\begin{equation*}
\begin{split}
Y (h) = & \  \lf[ c + \frac{ (n+3) c^2 - 4  }{ 4 (1-c^2)}  \sqrt{2 (1-c^2) }  \ri]   
 \int_\Omega \lf(  \frac12 |h|^2 + \frac14 | \bD \th |^2 \ri) \vbg \\
&  +  \lf[ \frac12 + \frac{ (n+3)  c^2 - 4 }{4(1-c^2)} \ri] \int_\Omega \l \th^2 \vbg 
 + \frac{c}{4} \int_\Omega     | \bD h |^2 \ \vbg . 
\end{split}
\end{equation*}
Now \eqref{eq-n345} implies  \eqref{eq-lmaii-1}, i.e.
\be \label{eq-prior-c}
  c + \frac{(n+3)c^2 - 4 }{ 4 ( 1 - c^2)} \sqrt{ 2 ( 1 - c^2) } > 0 . 
\ee
To continue, we only need to assume  $  \frac12 +  \frac{ (n+3)c^2 - 4 }{4(1-c^2)}  < 0  $. 
(If $ n \ge 5$, this term would  be nonnegative by \eqref{eq-nge5}.)

Given any  constants $ \theta, \tau \in (0,1)$,
using the fact  $|\bD h|^2\ge \frac{1}{n} |\bD\th|^2$, $|h|^2\ge \frac{1}{n} \th^2$, $  \l \le 1$ and applying \eqref{eq-mu-new} as in Theorem \ref{thm-RH rigidity 1}, 
we have
\be\label{eq-mixed-4}
\begin{split}
Y (h) \ge &  \int_\Omega  \left\{ \frac{\theta c}{4} | \bD h |^2 + \frac14\lf[\frac{1-\theta}{n} c + c +\frac{(n+3)c^2-4}{2\sqrt{2(1-c^2)}}\ri]   | \bnabla \th  |^2 \ri.  \\
& + {\tau} \lf[  c+\frac{ (n+3)c^2-4}{2 \sqrt{2(1-c^2)}}\ri]  \frac{|h|^2}{2}
+ \frac{1-\tau}{n}\lf[   c+\frac{ (n+3)c^2-4}{2 \sqrt{2(1-c^2)}} \ri]  \frac{\th ^2}{2} \\
&  \lf.  +\lf[ 1 +  \frac{ (n+3)c^2 - 4 }{2(1-c^2)}   \ri]  \frac{ \th ^2}{2} \ri\}  \ \vbg   \\
\ge & \  \ep \lf(   \int_\Omega | \bD h |^2 +   |h|^2   \vbg  \ri) \\
& +  \bigg\{\frac12\lf[\frac{ (n+1) - \theta} {n} c+\frac{(n+3)c^2-4}{2\sqrt{2(1-c^2)}} \ri]\mu(\delta)+\frac{1-\tau}{n}\lf[   c+\frac{ (n+3)c^2-4}{2 \sqrt{2(1-c^2)}} \ri]  \\
&+\lf[ 1 +  \frac{ (n+3)c^2 - 4 }{2(1-c^2)}   \ri]  \bigg\}  \left( \int_\Omega \frac{\th^2}{2} \ \vbg \right)
 +E(h)
\end{split}
\ee
 where $ \ep = \min \lf\{ \frac{\theta c}{4} , \frac{\tau}{2} \lf[  c+\frac{ (n+3)c^2-4}{2 \sqrt{2(1-c^2)}}\ri]  \ri\} > 0 $,
  $ \mu(\delta)$ is the first nonzero Neumann eigenvalue of $B(\delta)$, and $E(h)$ is an error term satisfying
 \begin{equation*}
\begin{split}
| E  (h) | \le &
 C \lf[\int_\Omega (R(g)-R(\bg))\ \vbg+
2 \int_\S (H(g)-H(\bg))\ \vsg\ri]^2\\
& +  C \lf[ \int_\Omega \lf(|h|^2+|\bD h|^2\ri) \vbg
  + \int_\Sigma  ( |h|^2 + | h | |\bD h| )  \vsg  \ri]^2
  \end{split}
  \end{equation*}
with $C$ depending only on $B(\delta)$.

Apply  the eigenvalue estimate  $\mu(\delta) > \frac{n}{ (\sin \delta)^2} =  \frac{n}{1-c^2}$ (Lemma \ref{lma-mu-estimate} (ii)),    one checks (using {\em Mathematica}) that
\begin{equation} \label{eq-Mathematica}
\begin{split}
0 < & \ \frac12\lf[\frac{ n+1} {n} c+\frac{(n+3)c^2-4}{2\sqrt{2(1-c^2)}} \ri]\mu(\delta)+\frac{1}{n}\lf[   c+\frac{ (n+3)c^2-4}{2 \sqrt{2(1-c^2)}} \ri] \\
& \ +\lf[ 1 +  \frac{ (n+3)c^2 - 4 }{2(1-c^2)}   \ri]
\end{split}
\end{equation}
for $ 1 > c > 0.6378 $ when $ n =3 $ and
for $ 1 > c  >  0.5933$ when $ n =4$.
In particular,    \eqref{eq-Mathematica} is guaranteed by  \eqref{eq-n345}.

Therefore, there exist small positive constants $ \theta$, $\tau$ such that the coefficient of
$ \int_\Omega \frac{\th^2}{2} \vbg $  in \eqref{eq-mixed-4} is  positive. For these $ \theta $ and $ \tau$,
we  have
$$
W (h) \ge Y (h) \ge  \ep \lf(   \int_\Omega | \bD h |^2 +   |h|^2   \vbg  \ri)  + E(h).
$$
Arguing as in the proof of Theorem \ref{thm-RH rigidity 1} (the part following \eqref{eq-BM-est-method2-3}),
we conclude that Theorem \ref{thm-BM} holds on such a $B(\delta)$.
\end{proof}

\end{document}